\tikzstyle{vertex} = [fill,shape=circle,node distance=80pt]
\tikzstyle{edge} = [opacity=0.4,fill opacity=0.0,line cap=round, line join=round, line width=40pt]
\tikzstyle{elabel} =  [fill,shape=circle,node distance=30pt]
\theoremstyle{plain}
\newtheorem{theorem}{Theorem}[section]		
\newtheorem{lemma}[theorem]{Lemma}
\newtheorem*{claim*}{Claim}
\newtheorem{claim}[theorem]{Claim}
\newtheorem{observation}[theorem]{Observation}
\newtheorem{corollary}[theorem]{Corollary}
\newtheorem{problem}[theorem]{Problem}
\theoremstyle{remark}
\newcommand{\eps}{\ensuremath{\varepsilon}}
\let\emptyset\varnothing
\let\originalleft\left
\let\originalright\right
\renewcommand{\left}{\mathopen{}\mathclose\bgroup\originalleft}
\renewcommand{\right}{\aftergroup\egroup\originalright}
\def\imod#1{\allowbreak\mkern10mu({\operator@font mod}\,\,#1)}
\title{A note on unavoidable patterns in locally dense colourings}
\author{Ant\'onio Gir\~ao}
\author{David Munh\'a Correia}
\thanks{
AG: Mathematical Institute, University of Oxford, Oxford OX2 6GG, UK. E-mail: {\tt girao@maths.ox.ac.uk}. { EPSRC grant EP/V007327/1}}
\thanks{
DMC: Department of Mathematics, ETH, Z\"urich, Switzerland. Research supported in part by SNSF grant 200021\_196965. E-mail: {\tt david.munhacanascorreia@math.ethz.ch}}
\begin{document}
\maketitle
\begin{abstract}
We show that there is a constant $C$ such that for every $\varepsilon>0$ any $2$-coloured $K_n$ with minimum degree at least $n/4+\varepsilon n$ in both colours contains a  complete subgraph on $2t$ vertices where one colour class forms a $K_{t,t}$, provided that $n\geq \varepsilon^{-Ct}$. Also, we prove that if $K_n$ is $2$-coloured with minimum degree at least $\varepsilon n$ in both colours then it must contain one of two natural colourings of a complete graph. Both results are tight up to the value of $C$ and they answer two recent questions posed by Kam\v{c}ev and M\"{u}yesser. 
\end{abstract}
\section{Introduction}

Ramsey theory is concerned with finding which structures are unavoidable in any finite partition of a complex enough structure. An example of this is the well known Ramsey Theorem which states that in any $r$-colouring of the edges of a $K_n$ there is a monochromatic $K_t$, provided $n$ is sufficiently large compared with $r$ and $t$. The problem of estimating the smallest such $n$ is perhaps one the most important open problems in the area. 

More recently, there has been a lot of interest in understanding what other coloured patterns of a clique must appear in any $2$-colouring of a $K_n$ if we instead impose extra conditions on the colouring. Cutler and Mont\'agh~\cite{CM}, answering a question of Bollob\'as, proved that for every $0<\varepsilon<1/2$ and $t\in \mathbb{N}$, there is $n(\varepsilon,t)$ such that in any $2$-edge colouring of $K_n$ (with $n\geq n(\varepsilon, t)$) where each colour appears in at least $\varepsilon n^2$ edges there must exist $2t$ vertices in which one of the colours induces a $K_t$ or two vertex disjoint $K_t$'s. 
Later, Fox and Sudakov~\cite{FS} gave essentially tight bounds showing that $n(\varepsilon, t)=\varepsilon^{-O(t)}$. 
Since then, there have been several extensions and variants of this result e.g., for multiple colours (\cite{colorfulpaths}, \cite{BLM}), replacing $K_n$ by a dense host graph (\cite{muyesser2020turan}), with $\varepsilon$ replaced by a function of $n$ (\cite{BLM}, \cite{caro2021unavoidable},\cite{girao2022two}) and for tournaments~\cite{girao2022turan}. 

We investigate yet another variant of the above problem raised by Wesley Pegden and studied recently by Kam\v{c}ev and M\"{u}yesser in \cite{KM}. They considered the following problem: which coloured patterns are unavoidable in $2$-colourings of $K_n$ such that each colour induces a graph with large minimum degree? They found that there exists a phase transition concerning what 'large' means. More precisely, they first showed that whenever $K_n$ is $2$-coloured with the property that every vertex is incident with at least $n/4+\varepsilon n$ edges in each colour there must exist a $K_{2t}$ where one colour induces a $K_{t,t}$, provided $n$ is sufficiently large compared with $\varepsilon, t$. 

Moreover, note that this behaviour no longer persists if the minimum degree of each colour is less than $n/4$. Indeed, let us say that a $2$-colouring of $K_{4n}$ is in $\mathcal{P}_n$ if there is a partition of the vertex set into $4$ sets, $V_1\cup V_2\cup V_3\cup V_4$, each of size $n$ such that the following holds: all edges with both endpoints in $V_1\cup V_2$ are red and all edges with both endpoints in $V_3\cup V_4$ blue; further, all edges with one endpoint in $V_1$ and another in $V_3$ are red and similarly all edges with one endpoint in $V_2$ and the other in $V_4$; the remaining edges are coloured blue. It is easy to see that any $2$-coloured complete graph in $\mathcal{P}_n$ for any $n$ does not contain a $K_{2t}$ where one colour induces a $K_{t,t}$ for any $t\in \mathbb{N}$. Therefore, for a lesser restriction on the minimum degree the class of unavoidable patterns should be larger. Indeed, Kam\v{c}ev and M\"{u}yesser in \cite{KM} showed the following. Let a $2$-coloured complete graph $G$ be a \emph{$t$-locally-unavoidable pattern} if one of the following holds.
\begin{itemize}
    \item The edges in one of the colours induce a $K_{t,t}$.
    \item $G \in P_t$.
\end{itemize}
\noindent Then, any $2$-colouring of $K_n$ such that each colour induces a graph of minimum degree at least $\eps n$ contains a $t$-locally-unavoidable pattern, provided that $n$ is sufficiently large compared to $\eps,t$.

\noindent In this note, we give tight bounds for both of these results. Firstly, we show the following.
\begin{theorem}\label{thm:1}
Let $1/4 > \eps > 0$, $C$ be an arbitrarily large constant and $t,n$ be such that $n > \eps^{-Ct}$. Then, every $2$-coloured $K_n$ such that every vertex has degree at least $n/4+\eps n$ in each colour contains an induced copy of $K_{t,t}$ in some colour.
\end{theorem}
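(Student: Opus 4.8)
\emph{Proof plan.} The plan is to first reformulate the conclusion. A set of $2t$ vertices on which one colour induces a $K_{t,t}$ is exactly an induced copy of $K_{t,t}$ in the red graph $R$ (when that colour is red), or an induced copy of $\overline{K_{t,t}}$ in $R$ --- i.e.\ an induced disjoint union of two cliques $K_t$ --- (when that colour is blue); so it suffices to exhibit, inside $R$, an induced $K_{t,t}$ or an induced $\overline{K_{t,t}}$. This goal, like the hypothesis $\delta(R),\delta(\overline R)\ge n/4+\eps n$ (with $\overline R$ the blue graph), is symmetric under interchanging the two colours. Set $k:=n^{1/(C_0t)}$ for a large constant $C_0$. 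If neither colour contains a clique of size $k$, i.e.\ $\alpha(R),\omega(R)\le k$, then by the induced universality of graphs without large homogeneous sets (Pr\"omel--R\"odl; cf.\ later work on induced subgraphs of Ramsey graphs), a graph with no homogeneous set of size $k$ on $N$ vertices contains every graph on at most $c\log N/\log k$ vertices as an induced subgraph; applying this with $N=n$ gives every graph on at most $c\,C_0t$ vertices inside $R$, in particular an induced $K_{t,t}$, since $2t\le c\,C_0t$ for $C_0$ large. Hence from now on, after possibly swapping the colours, we may fix a blue clique $I_0$ with $|I_0|=k$.

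Now every vertex of $I_0$ has at least $n/4+\eps n$ red neighbours, all of them outside the blue clique $I_0$, so there are at least $|I_0|(n/4+\eps n)$ red edges leaving $I_0$; double-counting $\bigl|\bigcap_{a\in A}N_R(a)\bigr|$ over the $t$-subsets $A$ of $I_0$ and applying convexity therefore yields a blue clique $A^{\ast}\subseteq I_0$ of size $t$ whose common red neighbourhood $W:=\bigcap_{a\in A^{\ast}}N_R(a)$ has $|W|\ge\tfrac14(1/4+\eps)^tn$, with $A^{\ast}\cap W=\emptyset$ and all $A^{\ast}$--$W$ edges red. If $R[W]$ has an independent set $B$ of size $t$ (a blue clique of size $t$), then $A^{\ast}\cup B$ is an induced $K_{t,t}$ in $R$ and we are done. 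Otherwise $\alpha(R[W])<t$, so by Ramsey $R[W]$ contains a large red clique $Q$, which is red-complete to $A^{\ast}$; running the symmetric argument inside $Q$ --- where every vertex sends all of its $\ge n/4+\eps n$ blue edges out of the red clique $Q$ --- produces a red clique $q^{\ast}\subseteq Q$ of size $t$ whose common blue neighbourhood $W'$ is linear in $n$, with all $q^{\ast}$--$W'$ edges blue. If $R[W']$ contains a red clique of size $t$, then together with $q^{\ast}$ it forms an induced $\overline{K_{t,t}}$ in $R$, and again we are done.

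The remaining case, $\alpha(R[W])<t$ and $\omega(R[W'])<t$ at once, is the crux, and it is here that the $+\eps n$ in the hypothesis is essential: at minimum degree exactly $n/4$ the colourings in $\mathcal P_n$ survive all the steps above, so the argument cannot avoid genuinely using the surplus. The idea is to feed the structure back: $\omega(R[W'])<t$ forces a large blue clique inside $W'$, so one re-enters the structured case, and iterating builds an increasingly rigid configuration --- a family of blue $t$-cliques $A^{\ast}_1,A^{\ast}_2,\dots$ no two of which are red-complete to one another (else we would already have an induced $K_{t,t}$), a family of red $t$-cliques $q^{\ast}_1,q^{\ast}_2,\dots$ no two blue-complete to one another, together with the split-graph-type incidences between these cliques coming from the neighbourhood constructions. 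One then derives a contradiction by a counting/stability argument that plays the degree bounds $n/4+\eps n$ and $3n/4-\eps n$ against the still-linear sizes of $W$, $W'$ and the later common neighbourhoods; the $\eps n$ surplus is precisely what rules out this $\mathcal P_n$-like structure at this scale. Carrying out this last step is the main obstacle. The remaining work is routine bookkeeping, arranged so that every threshold that appears is of the form $\eps^{-O(t)}$ --- each averaging step costs a factor $(1/4+\eps)^{-t}$ and each Ramsey or clique-extraction step a factor $4^{O(t)}$ or a polynomial, never an unaffordable $t^{\Theta(t)}$ --- which is what delivers the bound $n\ge\eps^{-Ct}$.
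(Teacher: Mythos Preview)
Your proposal is not a proof but a plan, and the plan stops precisely at the hard part. You yourself write that ``carrying out this last step is the main obstacle,'' and then offer only a vague ``counting/stability argument that plays the degree bounds \dots\ against the still-linear sizes of $W,W'$.'' Everything before that point --- finding a monochromatic clique, extracting a $t$-subset with a large common neighbourhood in the other colour, and observing that a further monochromatic $t$-clique inside that neighbourhood would finish --- is the easy setup; the paper does essentially this in its first lemma. The actual content of the theorem is exactly the step you have not done: why the iterated structure is incompatible with minimum degree $n/4+\eps n$ in both colours. Without a concrete mechanism here you have nothing beyond a restatement of the difficulty.

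Two further problems. First, your appeal to a ``Pr\"omel--R\"odl'' universality statement of the form ``no homogeneous set of size $k$ on $N$ vertices $\Rightarrow$ every graph on $c\log N/\log k$ vertices appears as an induced subgraph'' is not a theorem for $k$ polynomial in $N$; the Pr\"omel--R\"odl result is specifically for $k=O(\log N)$. The case split is in any event unnecessary: ordinary Ramsey already gives a monochromatic clique of size $\tfrac12\log_2 n\ge \tfrac{C}{2}t\log(1/\eps)$, which is more than enough for the convexity step. Second, your iteration sketch does not explain how the constraints ``no two blue $t$-cliques are mutually red-complete'' and ``no two red $t$-cliques are mutually blue-complete'' convert into a numerical contradiction with $n/4+\eps n$; this is exactly where the $\mathcal P_n$ examples must be excluded, and no argument is given.

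For comparison, the paper does not iterate neighbourhoods. It takes a maximal packing of disjoint sets $S^r_i$ (each of size $\eps^{-C't}$ and containing no red $K_t$) and $S^b_j$ (no blue $K_t$), proves via a short density lemma that the red density between any two $S^r$-sets, and the blue density between any two $S^b$-sets, is at most $\eps^{20}$, and then shows by maximality and edge counting that the packing covers all but an $\eps^2$-fraction of the vertices. The final contradiction is a one-line inequality: the red edges from $X^r:=\bigcup_i S^r_i$ to $X^b:=\bigcup_j S^b_j$ exceed $n|X^r|/4$, and symmetrically the blue edges exceed $n|X^b|/4$, forcing $|X^r||X^b|>(|X^r|+|X^b|)^2/4$. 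This is the ``counting/stability'' step you gesture at, but it requires the global packing structure, not the local neighbourhood sets $W,W'$ your outline produces.
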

\noindent We remark that this is tight up to the constant in the exponent. To see this, take a colouring of $K_{4n}$ in $P_{n}$ where $n\coloneqq \varepsilon^{-t/4}$. Now, we randomly recolour each edge inside $V_1\cup V_2$ to blue with probability $2\varepsilon$ independently, we do the same for all the edges in $V_3\cup V_4$. A simple concentration argument shows that w.h.p. the minimum degree in both colours is at least $n/4+\varepsilon n $ and there are no blue copies of $K_t, K_{t,t}\subset V_1\cup V_2$  or red copies $K_t,K_{t,t}\subset V_3 \cup V_4$. Secondly, we show the following.
\begin{theorem}\label{thm:2}
Let $1/2 > \eps > 0$, $C$ be an arbitrarily large constant and $t,n$ be such that $n > \eps^{-Ct}$. Then, every $2$-coloured $K_n$ such that every vertex has degree at least $\eps n$ in each colour contains a $t$-locally-unavoidable pattern.
\end{theorem}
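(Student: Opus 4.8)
We plan to assume that the $2$-colouring of $K_n$ contains no induced monochromatic $K_{t,t}$ and then to locate an induced copy of a colouring in $\mathcal P_t$, i.e., $4t$ vertices partitioned into sets $V_1,V_2,V_3,V_4$ of size $t$ realising the pattern in the definition of $\mathcal P_n$. Let $C_1$ be the constant supplied by Theorem~\ref{thm:1}, set $\e':=\e^{5}$, and take $C$ much larger than $C_1$. First we reduce to Theorem~\ref{thm:1}: if some set $S$ with $|S|\ge \e n/2$ has the property that inside $K_n[S]$ every vertex has degree at least $|S|/4+\e'|S|$ in each colour, then since $|S|\ge\e n/2>\e^{-Ct}/2>(\e')^{-C_1t}$ (using $C\gg C_1$), Theorem~\ref{thm:1} applied to $K_n[S]$ yields an induced monochromatic $K_{t,t}$, a contradiction. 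Hence we may assume the colouring is \emph{everywhere unbalanced}: every $S$ with $|S|\ge\e n/2$ contains a vertex whose red degree inside $S$ lies outside $\bigl[(1/4+\e')|S|,\,(3/4-\e')|S|\bigr]$. This is precisely the regime inhabited by the colourings in $\mathcal P_n$ and their small random perturbations, and in it we build the pattern directly.

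\emph{The structure.} The core of the proof is a stability statement: in the everywhere unbalanced regime and with no induced monochromatic $K_{t,t}$, the colouring is, after deleting $o(n)$ vertices, essentially a blow-up of the four-vertex pattern underlying $\mathcal P_n$. Concretely, one should find a partition $V(K_n)\setminus E=A_1\cup A_2\cup D_3\cup D_4$ with $|E|=o(n)$ and each part of size at least $\e^{2}n$, so that $A:=A_1\cup A_2$ spans a red clique, $D:=D_3\cup D_4$ spans a blue clique, and in the bipartite colouring between $A$ and $D$ all edges in $(A_1,D_3)$ and $(A_2,D_4)$ are red while all edges in $(A_1,D_4)$ and $(A_2,D_3)$ are blue, in each case with at most a $(4t)^{-1}$-fraction of exceptions at every vertex (from either side). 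The rough route: partition $V(K_n)$ by red degree into a high part, a low part and a middle part; the everywhere unbalanced hypothesis forces the middle part to be negligible; the high part is then (almost) a red clique and the low part (almost) a blue clique, since otherwise a monochromatic complete bipartite subgraph provided by the K\H{o}v\'ari--S\'os--Tur\'an bound can be trimmed to an induced monochromatic $K_{t,t}$; finally, the two-sided minimum-degree hypothesis makes the bipartite colouring between the two cliques genuinely mixed, and one further round of cleaning refines the bipartition into the four parts above.

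\emph{The embedding.} Given this structure --- after discarding the few vertices that violate the per-vertex bounds, so that $A$ and $D$ are genuine cliques and the bounds hold with, say, $(8t)^{-1}$ in place of $(4t)^{-1}$ --- one embeds $\mathcal P_t$ greedily over $4t$ rounds, placing one vertex at a time while maintaining candidate pools $X_1\subseteq A_1$, $X_2\subseteq A_2$, $X_3\subseteq D_3$, $X_4\subseteq D_4$, where $X_i$ consists of those vertices of the corresponding part that extend $V_i$ correctly with respect to everything placed so far (initially $X_i$ is the whole part). When a vertex $u$ is added to some $V_i$, each $X_j$ is replaced by its intersection with the appropriate monochromatic neighbourhood of $u$; since $u$ lies in a clique (so it is correctly coloured to all of the other pool inside that clique) and is correctly coloured to all but a $(8t)^{-1}$-fraction of each opposite part, two of the four pools are unchanged and the other two shrink by at most a $(8t)^{-1}$-fraction of a part. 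Over $4t$ rounds each pool thus loses at most half of its initial size of at least $\e^{2}n$, so no pool empties and the process never gets stuck; this is exactly where the hypothesis $n>\e^{-Ct}$ is used. The resulting sets $V_1,V_2,V_3,V_4$ induce a copy of a colouring in $\mathcal P_t$: $V_1\cup V_2$ spans a red $K_{2t}$, $V_3\cup V_4$ spans a blue $K_{2t}$, and the crossing colours are correct by construction.

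The main obstacle is the stability statement: establishing it while keeping every loss polynomial in $1/\e$, so that $n>\e^{-Ct}$ suffices, and in particular ruling out the degenerate possibilities --- the middle part failing to be negligible, one of the two monochromatic cliques being too small, or the bipartite colouring between them being nearly monochromatic --- each of which is excluded using both the two-sided minimum-degree hypothesis and the absence of an induced monochromatic $K_{t,t}$.
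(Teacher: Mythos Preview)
Your plan is genuinely different from the paper's, and in its present form it has a real gap at the stability step.

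The specific break is the implication ``everywhere unbalanced $\Rightarrow$ the middle part is negligible''. From the failure of the Theorem~\ref{thm:1} reduction you conclude only that every set $S$ with $|S|\ge \e n/2$ contains \emph{at least one} vertex whose red degree \emph{inside $S$} lies outside $[(1/4+\e')|S|,(3/4-\e')|S|]$. You then want the set $M$ of vertices whose \emph{global} red degree lies in $[(1/4+\e')n,(3/4-\e')n]$ to be small. These are different quantities: applying your hypothesis to $S=M$ yields one vertex that is unbalanced inside $M$, but that says nothing about its global red degree, so it does not leave $M$, and nothing shrinks. An iterative peeling argument does not obviously help either, since the thresholds move with the set. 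Without this step the whole stability route stalls; and even if the structural dichotomy you describe is true, the sketch gives no mechanism for keeping every loss polynomial in $1/\e$, which is exactly what the bound $n>\e^{-Ct}$ demands. (The subsequent step ``the high part is almost a red clique by K\H{o}v\'ari--S\'os--Tur\'an'' is also not immediate: a blue $K_{t,t}$ found by K--S--T is not \emph{induced} unless its two sides are red cliques, and arranging that needs more than a density argument.)

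The paper bypasses stability entirely. The key reduction (Observation~\ref{obs:altC4}, already in \cite{KM}) is that it suffices to find an \emph{alternating homogeneous $t$-blow-up of $C_4$}: four monochromatic $t$-cliques $A_0,A_1,A_2,A_3$ with the bipartite pieces $E[A_i,A_{i+1}]$ alternating red/blue around the cycle. Any such configuration already yields a $t$-locally-unavoidable pattern, so one assumes none exists and derives a contradiction. After a random bipartition $X\cup Y$, the proof runs a bounded iterative process: at step $i$ one maintains growing sets $X_1\subseteq X$, $Y_1\subseteq Y$ together with witness sets $X_2,Y_2$ of vertices almost entirely red into $Y_1$ (respectively blue into $X_1$); dependent random choice (Lemma~\ref{lem:123}) then either produces the forbidden $C_4$-blow-up or lets one enlarge $Y_1$ (or $X_1$) by a fixed fraction of what remains. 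The process must terminate within $O(\log(1/\e))$ rounds, at which point one of $X\setminus X_1$, $Y\setminus Y_1$ is so small that the minimum-degree hypothesis is violated. This yields the bound $n\ge\e^{-Ct}$ directly. If you want to rescue your approach, the stability statement itself would likely require the same dependent-random-choice machinery, and the more flexible $C_4$-blow-up target (rather than $\mathcal P_t$ directly) is probably essential.
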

\noindent Like before, it is easy to see that the above is tight up to the constant in the exponent. 
\section{Preliminaries}
\subsection{Notation}
Given a red/blue edge colouring of $K_n$ and a vertex $v$, we shall write $d_r(v)$ to denote the size of the neighbourhood of $x$ in red and similarly $d_b(v)$ the size of the neighbourhood of $v$ in blue.  We say $H\subset K_n$ forms an alternating homogeneous $t$-blowup of an alternating $C_4$ if $V(H)=A_0\cup A_1\cup A_2 \cup A_3$, $|A_i|=t$ and $A_i$ induces a monochromatic $K_t$, for all $i\in \{0,\ldots, 3\}$. Moreover, $E[A_i, A_{i+1}]$ is  monochromatic in red if $i\in \{0,2\}$ and monochromatic in blue if $i\in \{1,3\}$.  
We will often for simplicity of notation omit floors and ceilings.
\subsection{Standard tools}
\begin{lemma}[Chernoff bound]\label{lem:chernoff}
Let $X_1, \ldots X_n$ be independent random variables where $X_i\in \{0,1\}$. Let $X\coloneqq \sum_{i=1}^{n} X_i$. Then, 
$\mathbb{P}[|X-\mathbb{E}[X]|\geq t]\leq 2e^{-t^2/4} .$
\end{lemma}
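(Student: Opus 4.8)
\textbf{Proof plan for the Chernoff bound (Lemma~\ref{lem:chernoff}).}
The plan is to prove the two one-sided tail bounds $\mathbb{P}[X-\mathbb{E}[X]\geq t]\leq e^{-t^2/4}$ and $\mathbb{P}[X-\mathbb{E}[X]\leq -t]\leq e^{-t^2/4}$ separately, and then add them. I would use the standard exponential-moment (Bernstein/Hoeffding) method: for any $\lambda>0$, by Markov's inequality applied to the nonnegative random variable $e^{\lambda(X-\mathbb{E}[X])}$,
\[
\mathbb{P}[X-\mathbb{E}[X]\geq t]\leq e^{-\lambda t}\,\mathbb{E}\bigl[e^{\lambda(X-\mathbb{E}[X])}\bigr]=e^{-\lambda t}\prod_{i=1}^{n}\mathbb{E}\bigl[e^{\lambda(X_i-\mathbb{E}[X_i])}\bigr],
\]
where the last equality uses independence of the $X_i$. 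The key step is to bound each factor: since $X_i-\mathbb{E}[X_i]$ takes values in an interval of length $1$, Hoeffding's lemma gives $\mathbb{E}[e^{\lambda(X_i-\mathbb{E}[X_i])}]\leq e^{\lambda^2/8}$. I would prove Hoeffding's lemma inline by writing $p_i=\mathbb{E}[X_i]$, noting $e^{\lambda(X_i-p_i)}$ is a convex function of $X_i\in\{0,1\}$ hence bounded above by the chord, taking logs to get $\psi(\lambda)=\log\mathbb{E}[e^{\lambda(X_i-p_i)}]\leq -\lambda p_i+\log(1-p_i+p_ie^{\lambda})$, and then checking $\psi(0)=\psi'(0)=0$ and $\psi''(\lambda)\leq 1/4$ for all $\lambda$ (the second derivative is $q(1-q)$ where $q=p_ie^{\lambda}/(1-p_i+p_ie^{\lambda})\in[0,1]$), so Taylor's theorem yields $\psi(\lambda)\leq \lambda^2/8$.

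Combining, $\mathbb{P}[X-\mathbb{E}[X]\geq t]\leq e^{-\lambda t+n\lambda^2/8}$. The cleanest route that matches the constant in the statement is actually to use the cruder bound $\mathbb{E}[e^{\lambda(X_i-p_i)}]\leq e^{\lambda^2/4}$ (for instance via the elementary inequality $pe^{(1-p)\lambda}+(1-p)e^{-p\lambda}\leq e^{\lambda^2/4}$, which one verifies by the same convexity/Taylor argument), giving $\mathbb{P}[X-\mathbb{E}[X]\geq t]\leq e^{-\lambda t+n\lambda^2/4}$; optimising over $\lambda$ by taking $\lambda=2t/n$ produces $e^{-t^2/n}$, which is at most $e^{-t^2/4}$ whenever $n\geq 4$, and one handles small $n$ trivially since there the stated bound exceeds $1$ (or one simply keeps the stronger $e^{-t^2/n}$ and notes $t\leq n$ always, so $t^2/n\geq t^2/n$; in fact for the intended application any exponent of order $t^2/n$ suffices). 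By symmetry — applying the same argument to the variables $1-X_i$, or equivalently replacing $\lambda$ by $-\lambda$ — we get the matching lower tail $\mathbb{P}[X-\mathbb{E}[X]\leq -t]\leq e^{-t^2/4}$. Adding the two one-sided bounds gives $\mathbb{P}[|X-\mathbb{E}[X]|\geq t]\leq 2e^{-t^2/4}$, as claimed.

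The only real subtlety, and hence the part I would be most careful about, is making the constants line up with the exact form stated: the textbook Hoeffding constant is $2e^{-2t^2/n}$, which is stronger than $2e^{-t^2/4}$ as long as $n\geq 8$ (and the remaining finitely many cases are vacuous because the right-hand side is $\geq 1$), so strictly speaking the lemma as written follows immediately from Hoeffding's inequality. I would therefore present the proof as: (i) state the exponential Markov inequality and use independence to factor the moment generating function; (ii) prove the single-variable m.g.f.\ bound via convexity and a two-term Taylor expansion of $\psi$; (iii) optimise $\lambda$ and compare constants, dispatching small $n$ by the trivial bound. No use of any result beyond elementary calculus is needed, so this is self-contained.
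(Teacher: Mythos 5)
The paper never proves this lemma --- it is stated as a standard tool with no argument attached --- so there is no ``paper proof'' to compare against; what matters is whether your argument is sound. Your exponential-moment derivation is fine through the point where you obtain the Hoeffding-type bound $\mathbb{P}[X-\mathbb{E}[X]\geq t]\leq e^{-\lambda t+n\lambda^2/4}$ and optimise to get $e^{-t^2/n}$ (or the sharper $e^{-2t^2/n}$). The problem is the last step: you claim $e^{-t^2/n}\leq e^{-t^2/4}$ ``whenever $n\geq 4$'' and that $2e^{-2t^2/n}$ ``is stronger than $2e^{-t^2/4}$ as long as $n\geq 8$.'' Both comparisons are reversed. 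The inequality $e^{-ct^2/n}\leq e^{-t^2/4}$ requires $ct^2/n\geq t^2/4$, i.e.\ $n\leq 4c$; for large $n$ your bound is \emph{weaker} than the one you are trying to prove, not stronger, and the ``small $n$ is trivial'' remark addresses the harmless cases while leaving the genuinely problematic ones (large $n$) untouched. The parenthetical ``$t\leq n$ always, so $t^2/n\geq t^2/n$'' is a tautology and proves nothing.

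This gap cannot be repaired, because the lemma as printed is false: for $X\sim\mathrm{Binomial}(n,1/2)$ and $t=\sqrt{n}$, the left-hand side tends to $\mathbb{P}[|Z|\geq 2]\approx 0.046$ by the central limit theorem, while the claimed right-hand side $2e^{-n/4}$ tends to $0$. The statement is evidently a typo for a bound with $n$ in the exponent's denominator, such as $2e^{-t^2/(4n)}$ or the standard $2e^{-2t^2/n}$ --- which your argument \emph{does} establish, and which is all that the paper uses (in the proof of Theorem~\ref{thm:2} the deviations are of order $\eps n$, so any bound of the form $e^{-\Omega(t^2/n)}$ gives failure probability $e^{-\Omega(n)}$, small enough for a union bound over all vertices). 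You should have flagged the statement as unprovable as written rather than asserting it ``follows immediately from Hoeffding's inequality.''
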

\begin{lemma}[Ramsey's Theorem]\label{lem:ramsey}
Let $K_n$ be $2$-coloured and $n \geq 2^{2t}$. Then, it contains a monochromatic clique of size $t$.
\end{lemma}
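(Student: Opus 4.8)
The plan is to run the classical greedy ``neighbourhood halving'' argument. I would build a sequence of vertices $v_1,\dots,v_{2t}$ together with a nested chain of vertex sets $S_0\supseteq S_1\supseteq\cdots$ as follows. Start with $S_0=V(K_n)$. Having found $v_1,\dots,v_k$ with $v_k\in S_{k-1}$, observe that among the edges joining $v_k$ to $S_{k-1}\setminus\{v_k\}$ one colour $c_k$ is used on at least half of them; let $S_k$ be the $c_k$-neighbourhood of $v_k$ inside $S_{k-1}\setminus\{v_k\}$, so that $|S_k|\ge(|S_{k-1}|-1)/2$, and take $v_{k+1}$ to be any vertex of $S_k$.

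Next I would check that the process survives $2t$ steps. A one-line induction shows $|S_k|\ge 2^{2t-k}-1$ as long as $|S_0|\ge 2^{2t}$: the base case is immediate, and $|S_k|\ge(|S_{k-1}|-1)/2\ge(2^{2t-k+1}-2)/2=2^{2t-k}-1$. Hence $|S_{2t-1}|\ge 1$, so all of $v_1,\dots,v_{2t}$ and the colours $c_1,\dots,c_{2t-1}$ get defined. By construction $v_j\in S_j\subseteq S_i$ whenever $i<j$, and therefore the edge $v_iv_j$ has colour $c_i$.

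Finally I would apply pigeonhole to the string $c_1,\dots,c_{2t-1}$: with only two colours available, at least $t$ of these $2t-1$ entries coincide, say $c_{i_1}=\cdots=c_{i_t}$ with $i_1<\cdots<i_t$. Then $\{v_{i_1},\dots,v_{i_t}\}$ spans a monochromatic $K_t$, since every edge $v_{i_a}v_{i_b}$ with $a<b$ has colour $c_{i_a}$, the common colour. There is essentially no obstacle here; the only point needing mild care is tracking the additive $-1$ in the halving bound, which is precisely why starting from $2^{2t}$ is convenient. A tighter analysis via the recursion $R(s,t)\le R(s-1,t)+R(s,t-1)$ would even yield the sharper bound $R(t,t)\le\binom{2t-2}{t-1}\le 2^{2t-2}$, but the stated bound is more than enough for the later applications.
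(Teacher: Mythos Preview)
Your argument is correct: the greedy halving construction, the inductive bound $|S_k|\ge 2^{2t-k}-1$, and the final pigeonhole step all go through as written. Note, however, that the paper does not actually give a proof of this lemma; it is quoted as a standard preliminary tool (a well-known form of Ramsey's theorem) and used as a black box in the later arguments. So there is nothing to compare against --- your write-up simply supplies the classical Erd\H{o}s--Szekeres proof that the paper takes for granted.
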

\begin{lemma}\label{prop:KST}
Let $\alpha \in (0,1)$ and $a,k, b \in \mathbb{N}$ be such that $a \geq \frac{k}{\alpha}$. Let $G$ be a bipartite graph with parts $A,B$ such that $|A| = a$ and $|B| = b$ and suppose that $e(A,B) \geq \alpha ab$. Then, there exists a subset $S \subseteq A$ of size $k$ with at least $\left( \frac{\alpha}{e} \right)^k b$ common neighbours.
\end{lemma}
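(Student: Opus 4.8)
The plan is to run the standard Kővári--Sós--Turán double-counting argument. For a $k$-subset $S\subseteq A$, write $N(S)=\bigcap_{u\in S}N_G(u)\subseteq B$ for its set of common neighbours, and for $v\in B$ write $d(v)=|N_G(v)\cap A|$. First I would count in two ways the set of pairs $(S,v)$ with $S\in\binom{A}{k}$ and $v\in N(S)$; summing over $v$ first yields
\[
\sum_{S\in\binom{A}{k}}|N(S)|\;=\;\sum_{v\in B}\binom{d(v)}{k}.
\]

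Next I would lower-bound the right-hand side by convexity. The map $x\mapsto\binom{x}{k}$ is convex along the non-negative integers (its second difference is $\binom{x}{k-2}\ge 0$), so by Jensen's inequality
\[
\sum_{v\in B}\binom{d(v)}{k}\;\ge\; b\binom{\bar d}{k},\qquad \bar d:=\frac1b\sum_{v\in B}d(v)=\frac{e(A,B)}{b}\ge\alpha a,
\]
where on the right $\binom{\cdot}{k}$ denotes the usual polynomial $\binom{x}{k}=\tfrac{x(x-1)\cdots(x-k+1)}{k!}$. Since $\alpha a\ge k$ by the hypothesis $a\ge k/\alpha$, this polynomial is increasing on $[\alpha a,\infty)$, so $\binom{\bar d}{k}\ge\binom{\alpha a}{k}\ge(\alpha a/k)^{k}$, using the elementary bound $\binom{n}{k}=\prod_{i=0}^{k-1}\frac{n-i}{k-i}\ge (n/k)^{k}$, valid for any real $n\ge k$. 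Finally I would finish by averaging: there are $\binom{a}{k}\le a^{k}/k!\le (ea/k)^{k}$ choices of $S$ (using $k!\ge (k/e)^{k}$), so some $S$ satisfies
\[
|N(S)|\;\ge\;\frac{b\,(\alpha a/k)^{k}}{(ea/k)^{k}}\;=\;\left(\frac{\alpha}{e}\right)^{k}b,
\]
which is exactly the desired conclusion.

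The only mildly delicate point is the convexity step, because $\bar d$ need not be an integer and the polynomial $\binom{x}{k}$ is not convex on all of $[0,a]$ (it oscillates in sign on $[0,k-1]$). I would handle this by applying Jensen not to the polynomial itself but to the piecewise-linear function $\phi$ interpolating the values $\binom{x}{k}$ at the integers $0,1,\dots,a$: this $\phi$ is genuinely convex (the integer sequence $(\binom{x}{k})_{x\ge 0}$ is convex) and increasing, it agrees with $\binom{x}{k}$ at integers, and since $\binom{x}{k}$ \emph{is} convex on $[k-1,\infty)$ (as a polynomial in $x-(k-1)$ with non-negative coefficients) it lies below its chords there, so $\phi(\bar d)\ge\binom{\bar d}{k}$; hence the chain of inequalities above goes through with no loss. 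Everything else is routine manipulation of binomial coefficients, and the hypothesis $a\ge k/\alpha$ is used precisely once, to guarantee $\alpha a\ge k$ so that $\binom{\alpha a}{k}\ge(\alpha a/k)^{k}$.
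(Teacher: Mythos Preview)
Your proof is correct and follows essentially the same approach as the paper's: the double-counting identity $\sum_{S}|N(S)|=\sum_{v\in B}\binom{d(v)}{k}$, the convexity lower bound $\sum_v\binom{d(v)}{k}\ge b\binom{\alpha a}{k}$, and then the estimates $\binom{a}{k}\le(ea/k)^k$ and $\binom{\alpha a}{k}\ge(\alpha a/k)^k$. If anything you are more careful than the paper, which glosses over the convexity step with a single word; your piecewise-linear interpolation argument to handle the non-integer average $\bar d$ is a nice way to make that step rigorous.
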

\begin{proof}
Let $x=\left( \frac{\alpha}{e} \right)^k N$.  Suppose, for contradiction sake, that every $k$ vertices in $A$ have less than $x$ common neighbors in $[N]$. Then, by double-counting and convexity, note that \begin{equation*}\label{EquationGoesBrrrr}
x\binom{t}{k} > \sum_{v\in B}\binom{d(v)}{k} \geq  N \binom{\alpha t}{k}
\end{equation*}
Note that ${t \choose k}/{\alpha t \choose k} \leq \left(\frac{e t}{k} \right)^k / \left(\frac{\alpha t}{k} \right)^k$;
so, by definition of $x$, we have ${t \choose k}/{\alpha t \choose k} \leq \frac{N}{x}$, contradicting (\ref{EquationGoesBrrrr}). 
\end{proof}
\noindent The next statement is an easy corollary of the two above lemmas.
\begin{corollary}\label{cor:ramseyKST}
Let $\alpha \in (0,1)$ and $a,k, b \in \mathbb{N}$ be such that $a \geq 2k$. Then, every two-coloured $K_{a,b}$ with parts $A$ of size $a$ and $B$ of size $b$ contains a monochromatic copy of $K_{k, b/(2e)^k}$ with the part of size $k$ contained in $A$. Moreover, if $B$ is also $2$-coloured and $b \geq (4e)^k$, then there exist $A' \subseteq A,B' \subseteq B$ which form a monochromatic copy of $K_{k,k}$ and such that $B'$ is a monochromatic clique.
\end{corollary}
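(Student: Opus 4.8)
The plan is to assemble both statements from Lemmas~\ref{lem:ramsey} and~\ref{prop:KST}. For the first assertion, observe that in the $2$-coloured $K_{a,b}$ one of the two colours --- say red --- is used on at least half of the $ab$ cross-edges. Since $a\ge 2k=k/\alpha$ when $\alpha=1/2$, we may apply Lemma~\ref{prop:KST} to the red bipartite graph with this value of $\alpha$; it yields a set $A_0\subseteq A$ with $|A_0|=k$ whose common red neighbourhood $B_0\subseteq B$ has size at least $b/(2e)^k$. Then $A_0$ together with $B_0$ contains a red copy of $K_{k,\,b/(2e)^k}$ with the part of size $k$ inside $A$, which is the first claim (the colour being blue instead of red if blue was the majority colour).

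For the ``moreover'' part I would run exactly the same argument to obtain a monochromatic --- say red --- copy of $K_{k,m}$ whose side of size $k$ is a set $A_0\subseteq A$ and whose side of size $m$ is a set $B_0\subseteq B$ with $m\ge b/(2e)^k$. As $B_0$ is a subset of $B$, it induces a $2$-coloured complete graph on $m$ vertices, and since $b$ is exponentially large in $k$, a routine calculation gives $m\ge 2^{2k}$; hence Lemma~\ref{lem:ramsey} applies inside $B_0$ and produces a monochromatic clique $B'\subseteq B_0$ with $|B'|=k$. Setting $A'\coloneqq A_0$, every edge between $A'$ and $B'$ is red, because $B'\subseteq B_0$ lies in the common red neighbourhood of $A_0$, while $B'$ is a monochromatic clique by construction. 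Thus $A'$ and $B'$ are as required.

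Since this is merely a combination of two black boxes there is no genuine obstacle; the only thing requiring care is the arithmetic, namely checking that after the application of Lemma~\ref{prop:KST} the surviving set $B_0$ still has at least $2^{2k}$ vertices, so that Ramsey's theorem delivers a clique of size $k$. As the eventual theorems only ask for $n>\eps^{-Ct}$ with $C$ arbitrarily large, the precise base of the exponential in the hypothesis on $b$ is unimportant and I would not try to optimise it.
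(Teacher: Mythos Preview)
Your approach is exactly what the paper intends: it merely asserts that the corollary follows from Lemmas~\ref{lem:ramsey} and~\ref{prop:KST} without spelling out the details, and your two-step argument (majority colour, then K\H{o}v\'ari--S\'os--Tur\'an, then Ramsey inside $B_0$) is the natural reading.

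One small point worth flagging: your ``routine calculation'' does not actually go through with the stated hypothesis $b\ge(4e)^k$, since then $m\ge b/(2e)^k\ge(4e)^k/(2e)^k=2^k$, whereas Lemma~\ref{lem:ramsey} needs $2^{2k}$ vertices to guarantee a monochromatic $K_k$. So the constant in the corollary as written appears to be off by a factor (e.g.\ $b\ge(8e^2)^k$ would do). You already anticipated this and are right that the precise base is immaterial for the downstream applications, where $b$ is always a power of $n$; I mention it only because your sentence asserts $m\ge 2^{2k}$ outright rather than noting the discrepancy.
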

\begin{lemma}[Dependant Random Choice, \cite{drc}]\label{lem:drc}
Let $G$ be a bipartite graph with parts $A,B$ such that $e(A,B) \geq \delta|A||B|$. Then, if $a,m,r,s$ are such that
$$|B|\left(\frac{\delta}{e} \right)^{r} - |B|^s \cdot \frac{m^r}{|A|^r} \geq a ,$$
there exists a set $S \subseteq B$ of size $a$ such that every $s$ vertices in $S$ have at least $m$ common neighbours in $A$.
\end{lemma}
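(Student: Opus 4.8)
The plan is to prove the statement by the \emph{dependent random choice} argument itself. Sample vertices $v_1,\dots,v_r$ from $A$ uniformly and independently (with repetition allowed), and let $S_0\coloneqq\{b\in B:\{v_1,\dots,v_r\}\subseteq N(b)\}$ be their common neighbourhood in $B$. Call a set $W\subseteq B$ with $|W|=s$ \emph{bad} if its common neighbourhood $\bigcap_{b\in W}N(b)$ in $A$ has fewer than $m$ vertices. The idea is that $S_0$ is large in expectation but contains few bad $s$-subsets, and that after deleting one vertex from each bad $s$-subset inside $S_0$ we are left with the desired set.

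First I would estimate $\E|S_0|$. Since each $v_j$ lands in $N(b)$ with probability $d(b)/|A|$, we get $\mathbb{P}[b\in S_0]=(d(b)/|A|)^r$ and hence $\E|S_0|=\sum_{b\in B}(d(b)/|A|)^r$. As $x\mapsto x^r$ is convex on $[0,\infty)$ and $\sum_{b\in B}d(b)=e(A,B)\ge\delta|A||B|$, Jensen's inequality yields $\E|S_0|\ge|B|\delta^r\ge|B|(\delta/e)^r$. Next I would bound the expected number $Y$ of bad $s$-subsets contained in $S_0$. For a fixed $W$ with common neighbourhood $N_W\subseteq A$, one has $\mathbb{P}[W\subseteq S_0]=(|N_W|/|A|)^r$, which is at most $(m/|A|)^r$ when $W$ is bad; summing over the at most $|B|^s$ sets $W$ gives $\E Y\le|B|^sm^r/|A|^r$.

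Combining the two estimates, $\E[\,|S_0|-Y\,]\ge|B|(\delta/e)^r-|B|^sm^r/|A|^r\ge a$ by hypothesis, so we may fix an outcome of $v_1,\dots,v_r$ with $|S_0|-Y\ge a$. For that outcome, delete one vertex from each (of the at most $Y$ many) bad $s$-subsets of $S_0$; the remaining set has size at least $|S_0|-Y\ge a$, and every $s$-subset of it is an $s$-subset of $S_0$ from which no vertex was removed, hence is not bad. Taking any $a$-element subset $S$ of it finishes the proof, since then every $s$ vertices of $S$ have at least $m$ common neighbours in $A$.

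I do not expect a genuine obstacle here; the points that need a little care are the convexity step — sampling \emph{with} repetition is what makes $x\mapsto x^r$ the relevant (honestly convex) function — and checking that deleting a vertex from every bad $s$-subset of $S_0$ really destroys all of them simultaneously. Degenerate situations are harmless: the identities $\mathbb{P}[b\in S_0]=(d(b)/|A|)^r$ and $\mathbb{P}[W\subseteq S_0]=(|N_W|/|A|)^r$ hold with no restriction on $d(b)$, $|N_W|$, $m$ or $|A|$, and if the numerical hypothesis on $a,m,r,s$ fails there is nothing to prove.
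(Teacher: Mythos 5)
Your proof is correct: it is the standard dependent random choice argument (sample $r$ vertices of $A$ with repetition, apply Jensen's inequality to $\E|S_0|$, bound the expected number of bad $s$-subsets by $|B|^s m^r/|A|^r$, and delete one vertex from each bad subset), and your bound $\E|S_0|\ge |B|\delta^r$ is even slightly stronger than the $|B|(\delta/e)^r$ required. The paper itself gives no proof of this lemma, citing Fox and Sudakov instead, and your argument is exactly the one found in that reference.
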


\begin{lemma}\label{lem:123}
Let $1/2 > \delta > 0$, $K_n$ be 2-coloured with red and blue and $A,B,C$ be subsets of size at least some $N \geq \delta^{-1000t}$. Suppose further that $e_r(A,B) \geq \delta|A||B|$ and for all $u \in B$ we have $d_b(u,C) \geq \delta |C|$. Then, there exist subsets $A' \subseteq A, B' \subseteq B, C' \subseteq C$ such that the following hold.
\begin{itemize}
    \item $B'$ is a monochromatic clique of size $10t$ and $|A'|,|C'| \geq N^{1/3}$.
    \item All the edges in $E[A',B']$ are red and all the edges in $E[B',C']$ are blue.
\end{itemize}
Moreover, the complete bipartite graph formed by $A',C'$ contains monochromatic copies of $K_{10t,N^{1/4}}$ with the part of size $10t$ inducing a monochromatic clique and contained in either side.
\end{lemma}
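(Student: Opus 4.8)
The plan is to find, inside $B$, a small monochromatic clique $B'$ whose common red neighbourhood in $A$ and common blue neighbourhood in $C$ are both polynomially large in $N$, and then to extract the asserted bipartite structure between those two neighbourhoods using Ramsey's theorem (Lemma~\ref{lem:ramsey}) and Corollary~\ref{cor:ramseyKST}. The engine is Dependent Random Choice (Lemma~\ref{lem:drc}), applied twice: once to the red graph between $A$ and $B$, and once to the blue graph between $B$ and $C$.

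Since $B$ may be much larger than $A$, I would first replace $B$ by a subset $B_0\subseteq B$ with $|B_0|=N$ and $e_r(A,B_0)\ge\delta|A||B_0|$, which exists by averaging over uniformly random $N$-subsets of $B$ (and every $u\in B_0$ still has $d_b(u,C)\ge\delta|C|$). Applying Lemma~\ref{lem:drc} to the red bipartite graph between $A$ and $B_0$ with parameters $s=10t$, $r=20t$, $m=2N^{1/3}$ and $a=N^{1/2}$, the hypothesis $N\ge\delta^{-1000t}$ makes the required inequality $|B_0|(\delta/e)^{r}-|B_0|^{s}m^{r}/|A|^{r}\ge a$ hold, producing $B_1\subseteq B_0$ with $|B_1|=N^{1/2}$ in which every $10t$ vertices have at least $2N^{1/3}$ common red neighbours in $A$. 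Since $B_1\subseteq B$, summing $d_b(u,C)\ge\delta|C|$ over $u\in B_1$ gives $e_b(C,B_1)\ge\delta|C||B_1|$, so a second application of Lemma~\ref{lem:drc} — this time to the blue bipartite graph between $C$ and $B_1$, with the same $s,r,m$ and $a=N^{1/4}$ — yields $B_2\subseteq B_1$ with $|B_2|=N^{1/4}$ in which every $10t$ vertices additionally have at least $2N^{1/3}$ common blue neighbours in $C$. The truncation of $B$ to size $N$ (so that $|B_0|^{s}$ does not swamp the error term in the first application) and keeping the intermediate set $B_1$ of polynomial size (so that $|B_1|^{s}m^{r}/|C|^{r}$ stays negligible in the second) is the one genuinely delicate point; everything else is comfortably absorbed by $N\ge\delta^{-1000t}$.

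Now $|B_2|=N^{1/4}\ge 2^{20t}$, so Lemma~\ref{lem:ramsey} produces a monochromatic clique $B'\subseteq B_2$ with $|B'|=10t$. Let $A'$ be the set of vertices of $A$ red-complete to $B'$, and $C'$ the set of vertices of $C$ blue-complete to $B'$, in each case with $B'$ deleted. Then $|A'|,|C'|\ge 2N^{1/3}-10t\ge N^{1/3}$, all edges of $E[A',B']$ are red and all edges of $E[B',C']$ are blue by construction, and the three sets are pairwise disjoint: $B'$ was removed from $A'$ and $C'$, and $A'\cap C'=\emptyset$ because no vertex can be simultaneously red-complete and blue-complete to the nonempty set $B'$. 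This establishes the first two bullet points.

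For the "moreover" statement, since $A'$ and $C'$ are disjoint, $K_{A',C'}$ is a genuine $2$-coloured complete bipartite graph with both parts of size at least $N^{1/3}$. By Lemma~\ref{lem:ramsey}, $A'$ contains a monochromatic clique $\widetilde A$ with $|\widetilde A|=20t$ (as $N^{1/3}\ge 2^{40t}$); applying the first part of Corollary~\ref{cor:ramseyKST} to the $2$-coloured $K_{\widetilde A,C'}$ with $k=10t$ (so $|\widetilde A|=2k$) yields a monochromatic $K_{10t,\,|C'|/(2e)^{10t}}$ whose side of size $10t$ lies in $\widetilde A$ and hence is a monochromatic clique; moreover $|C'|/(2e)^{10t}\ge N^{1/3}/(2e)^{10t}\ge N^{1/4}$ since $N^{1/12}\ge(2e)^{10t}$. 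Running the same argument with the roles of $A'$ and $C'$ exchanged produces a copy whose clique side lies in $C'$, so the required monochromatic $K_{10t,N^{1/4}}$ is obtained with the clique side on either side, completing the proof. All numerical inequalities used above follow from $N\ge\delta^{-1000t}$ together with $\delta<1/2$, so the only step demanding real attention is the calibration of the two Dependent Random Choice applications described in the second paragraph.
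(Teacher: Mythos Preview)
Your proof is correct and follows essentially the same approach as the paper's: two applications of Dependent Random Choice (first to the red $A$--$B$ graph, then to the blue graph between the resulting subset and $C$), then Ramsey to extract the monochromatic $10t$-clique $B'$, and finally Corollary~\ref{cor:ramseyKST} for the bipartite structure between $A'$ and $C'$. The only notable technical differences are that you truncate $B$ to size $N$ by averaging (whereas the paper instead chooses the DRC parameter $r$ adaptively so that $|A|^{(r-1)/2}\le |B|^{10t}\le |A|^{r/2}$), and for the ``moreover'' clause you explicitly pass through a monochromatic $20t$-clique via Ramsey before invoking the corollary, which is a bit more careful than the paper's one-line appeal to Corollary~\ref{cor:ramseyKST}.
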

\begin{proof}
Note first that since $e_r(A,B) \geq \delta|A||B|$, we can directly apply Lemma \ref{lem:drc} in order to find a set $B_1 \subseteq B$ of size $N^{0.9}$ such that every $10t$ vertices in $B_1$ have at least $\sqrt{N}$ red common neighbours in $A$. Indeed, we can do this by setting $s := 10t, m :=\sqrt{N}, a := N^{0.9}$ and $r$ such that $|A|^{(r-1)/2} \leq |B|^{s} \leq |A|^{r/2}$ - so that in particular, $|B| \geq |A|^{(r-1)/2s} \geq N^{(r-1)/20t} \geq \delta^{-40r}$ and thus, $|B| \left(\frac{\delta}{e} \right)^r \geq |B|^{0.9} \geq N^{0.9}$. Secondly, since every vertex $u \in B_1 \subseteq B$ has $d_b(u,C) \geq \delta |C|$, it must be that $e_b(B_1,C) \geq \delta|B_1||C|$ and thus, we can again apply Lemma \ref{lem:drc} to find a set $B_2 \subseteq B_1$ of size at least $2^{20t}$ such that every $10t$ vertices in $B_1$ have at least $N^{1/3}$ blue common neighbours in $C$. We now apply Lemma \ref{lem:ramsey} to find a monochromatic clique $B' \subseteq B_2$ of size $10t$, which by construction must be such that its vertices have at least $\sqrt{N}$ red common neighbours in $A$ (which we take to be the set $A'$) and at least $N^{1/3}$ blue common neighbours in $C$ (which we take to be the set $C'$). Now, in order to find the desired monochromatic copies of $K_{10t,N^{1/4}}$ in the bipartite graph formed by $A',C'$, we can directly apply Corollary~\ref{cor:ramseyKST}.
\end{proof}
\begin{lemma}\label{lem:unav}
Let $K_n$ be $2$-coloured with red and blue and $A,B$ be two disjoint sets with the property that there is no red $K_{t}$ contained either $A$ or $B$. Moreover, suppose that the subgraph of red edges does not induce a $K_{t,t}$. Then, provided $|A|,|B|\geq \delta^{-C't}$ for some large enough constant $C'$, we have $e_r(A,B) \leq \delta^{20}|A||B|$.
\end{lemma}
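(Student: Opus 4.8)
I would argue by contradiction: suppose $e_r(A,B)\geq \delta^{20}|A||B|$. I will then produce $2t$ vertices on which the red graph induces a $K_{t,t}$ — that is, disjoint sets $U\subseteq A$ and $W\subseteq B$ of size $t$, each spanning a blue clique, with every edge between $U$ and $W$ red — which contradicts the hypothesis that the red graph induces no $K_{t,t}$. Before starting, I would equalise the two sides: if, say, $|A|>|B|$, a uniformly random $|B|$-subset $A_0\subseteq A$ satisfies $\mathbb{E}\,e_r(A_0,B)=\tfrac{|B|}{|A|}e_r(A,B)\geq\delta^{20}|B|^2$, so some such $A_0$ has red density at least $\delta^{20}$ towards $B$; replacing $A$ by $A_0$ preserves all hypotheses (``no red $K_t$ in $A$'' passes to subsets, and ``no red $K_{t,t}$'' is a property of the whole $K_n$), so from now on I may assume $|A|=|B|=:N\geq \delta^{-C't}$.

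\textbf{Step 1 (dependent random choice).} Apply Lemma~\ref{lem:drc} to the red bipartite graph between $A$ and $B$ with density parameter $\delta^{20}$, with the two parts interchanged so that the output set lands in $A$, and with the choices $s:=t$, $m:=N^{0.9}$, $a:=N^{0.9}$ and $r:=12t$. Using $\log N\geq C't\log(1/\delta)$, a direct computation (valid as soon as $C'$ is a large enough absolute constant, say $C'\geq 2600$) shows $N(\delta^{20}/e)^{r}\geq 2N^{0.9}$ while the error term is $N^{t}(m/N)^{r}=N^{-0.2t}\leq 1$, so the hypothesis of Lemma~\ref{lem:drc} holds. This produces a set $S\subseteq A$ with $|S|=N^{0.9}$ such that every $t$ vertices of $S$ have at least $N^{0.9}$ common red neighbours in $B$.

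\textbf{Step 2 (extracting two blue cliques).} Since $S\subseteq A$ contains no red $K_t$ and $|S|=N^{0.9}\geq\delta^{-0.9C't}\geq 2^{2t}$, Lemma~\ref{lem:ramsey} gives a monochromatic $K_t$ inside $S$, which is necessarily blue; call it $U$. By Step~1, $U$ has a common red neighbourhood $W_0\subseteq B$ with $|W_0|\geq N^{0.9}\geq 2^{2t}$. Since $B$ contains no red $K_t$, Lemma~\ref{lem:ramsey} applied to $W_0$ yields a blue clique $W\subseteq W_0$ of size $t$. Now $U\subseteq A$ and $W\subseteq B$ are disjoint sets of size $t$, the red graph has no edge inside $U$ or inside $W$ (both span blue cliques), and every $U$--$W$ edge is red (as $W\subseteq W_0$). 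Hence the red graph induces a $K_{t,t}$ on $U\cup W$, the desired contradiction, and therefore $e_r(A,B)\leq\delta^{20}|A||B|$.

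\textbf{Main obstacle.} The only place that needs genuine care is the parameter check in Step~1: the exponent $r$ must be small enough — roughly $r\lesssim\log N/\log(1/\delta)$ — that $N(\delta^{20}/e)^{r}$ remains polynomially large in $N$, yet large enough ($r\gtrsim t$ suffices) to make the error term $N^{t}(m/N)^{r}$ negligible; these two requirements are simultaneously satisfiable precisely because $N\geq\delta^{-C't}$ with $C'$ large, which is exactly where that hypothesis enters. Everything else is just the two invocations of Ramsey's theorem and the elementary equalising step.
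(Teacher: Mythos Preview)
Your proof is correct and follows essentially the same approach as the paper: assume the red density exceeds $\delta^{20}$, use dependent random choice to find a large set $S\subseteq A$ in which every $t$-tuple has many common red neighbours in $B$, extract a blue $K_t$ from $S$ by Ramsey (there is no red $K_t$ in $A$), and then extract a second blue $K_t$ from its common red neighbourhood in $B$, yielding a red induced $K_{t,t}$. The paper packages the first two steps into a citation of Lemma~\ref{lem:123}, whereas you invoke Lemma~\ref{lem:drc} and Lemma~\ref{lem:ramsey} directly and add a harmless equalisation step; these are presentational differences only.
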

\begin{proof}
Otherwise, we can apply Lemma \ref{lem:123} to find a monochromatic clique $A' \subseteq A$ of size $t$ with at least $2^{2t}$ red common neighbours in $B$. By assumption $A'$ is a blue clique and by Lemma \ref{lem:ramsey} its set of red common neighbours in $B$ contains a monochromatic clique of size $t$, which must also be blue. Observe that this creates an induced copy of $K_{t,t}$ in red, a contradiction. 
\end{proof}

\section{Proof of Theorem \ref{thm:1}}

\begin{proof}
Let $C$ be an arbitrarily large constant and $n \geq \eps^{-Ct}$. Suppose that $K_n$ is edge-coloured with red and blue so that $d_r(v),d_b(v) \geq n/4+\eps n$ for every vertex $v$. For the sake of contradiction, we will assume there is no induced copy of $K_{t,t}$ in either of the colours. Now, let $S^{r}_1, \ldots, S^r_{m_r}, S^{b}_1, \ldots, S^b_{m_b}$ be a collection of disjoint sets of size $\eps^{-C't}$, where $C'$ is the constant in Lemma \ref{lem:unav} applied with $\delta:= \eps$, such that each $S^r_i$ contains no red $K_{t}$ and each $S^b_i$ contains no blue $K_{t}$. Let $m := m_r m_b$ be maximal with respect to this.
\begin{lemma}
$m > 0$.
\end{lemma}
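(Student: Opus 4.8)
The statement to prove is that $m>0$, i.e. that there exist at least one set $S^r_1$ of size $\eps^{-C't}$ with no red $K_t$ and at least one set $S^b_1$ of size $\eps^{-C't}$ with no blue $K_t$ (so that $m_r,m_b\geq 1$ and hence $m=m_rm_b>0$). The plan is to argue by contradiction: if no set of size $\eps^{-C't}$ avoids a red $K_t$, then the red graph is ``locally Ramsey-dense'' everywhere, and this forces a red $K_{t,t}$, contradicting our standing assumption. By the symmetry between red and blue, it suffices to produce one set $S^r_1$ of the required size containing no red $K_t$; the analogous argument produces $S^b_1$.

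**Finding a $K_t$-free set of the required size.** Suppose for contradiction that every subset of $V(K_n)$ of size $\eps^{-C't}$ contains a red $K_t$. I would first extract, greedily, a large collection of disjoint red $K_t$'s: keep pulling out a red $K_t$ from the remaining vertices until fewer than $\eps^{-C't}$ vertices remain; since $n\geq\eps^{-Ct}$ with $C$ much larger than $C'$, this yields roughly $n/t$ disjoint red cliques $Q_1,\dots,Q_N$ with $N\geq n/(2t)$. Now pick one vertex $v_j$ from each $Q_j$ to form a set $W$ of size $N$; inside $W$ (which still has size at least, say, $\eps^{-Ct/2}$), the hypothesis again forces a red $K_t$, which selects $t$ of the cliques, say $Q_{j_1},\dots,Q_{j_t}$, whose representatives form a red clique. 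Iterating this ``clique of representatives'' step, or more efficiently invoking Ramsey (Lemma~\ref{lem:ramsey}) on $W$, one finds a red $K_{t}$ on representatives $v_{j_1},\dots,v_{j_t}$; but then $Q_{j_1}\cup\dots\cup Q_{j_t}$ is itself essentially a large red structure. The cleanest route is: take two disjoint red cliques $Q,Q'$ each of size $t$ from the greedy family (any two of the $Q_j$'s), look at the bipartite graph $E_r[Q,Q']$; since we may assume it is dense (if not, restrict and recurse using the abundance of cliques), apply Lemma~\ref{lem:unav} with $\delta=\eps$ — wait, that lemma needs the \emph{absence} of red $K_t$ in the parts, which fails here. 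So instead one directly uses Lemma~\ref{lem:123}/Corollary~\ref{cor:ramseyKST} on the representative structure to upgrade a dense red bipartite graph between two large vertex sets into a red $K_{t,t}$ with both sides monochromatic blue cliques, which is precisely an induced red $K_{t,t}$.

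**The cleanest argument.** Actually the sharpest version: assume every set of size $\eps^{-C't}$ has a red $K_t$. Partition $V(K_n)$ into two sets $X,Y$ each of size $n/2\geq\eps^{-C't}$. Greedily extract disjoint red $K_t$'s inside $X$, getting $\geq |X|/(2t)$ of them, and form $X^*$ by taking one representative of each; then $|X^*|\geq n/(4t)\geq\eps^{-C't}$, so $X^*$ contains a red $K_t$ on representatives, which inflates to $t$ disjoint red $K_t$'s in $X$ whose union spans a red $K_t$ across any transversal — repeating $\log t$ times (or once with Ramsey) gives red cliques $R_1,R_2\subseteq X$ with $|R_i|=t$ and \emph{all} edges between them red. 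Do the same in $Y$. Now $R_1\cup R_2$ is a red $K_{2t}$; inside it, $R_1$ is a blue clique? No — $R_1$ is a red clique. So take instead $R_1$ red clique of size $t$ in $X$ and examine its red neighbourhood: every vertex of $R_1$ has $d_r\geq n/4+\eps n$, so by inclusion–exclusion the common red neighbourhood of the $t$ vertices of $R_1$ has size at least $n - t(3n/4 - \eps n)$, which is \emph{negative} for large $t$ — this crude bound fails, so one must instead use that $R_1$ lies inside a much larger red-clique-rich set and apply Lemma~\ref{lem:123} to get a red $K_{t,N^{1/4}}$ with the $t$-side a blue clique, then Ramsey on the big side to get a blue $K_t$ there, yielding an induced red $K_{t,t}$.

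**Main obstacle.** The genuine difficulty is that the minimum-degree hypothesis $d_r,d_b\geq n/4+\eps n$ is \emph{not} directly used to get $m>0$; rather, $m>0$ is essentially a Ramsey-type statement saying that a $2$-colouring of $K_n$ with $n$ huge cannot have \emph{every} large set be red-$K_t$-free \emph{and} blue-$K_t$-free simultaneously — indeed if every $\eps^{-C't}$-set contains a red $K_t$, then by Ramsey (Lemma~\ref{lem:ramsey}) since $\eps^{-C't}\geq 2^{2t}$, large sets contain \emph{monochromatic} $K_t$'s anyway, so the real content is showing one can find a set avoiding red $K_t$: this is immediate because any single blue clique of size $t$ (which exists by Ramsey) can be padded — no. The honest obstacle is verifying that a set of size exactly $\eps^{-C't}$ with no red $K_t$ exists; I expect the authors obtain this from the assumed absence of an induced red $K_{t,t}$ via Lemma~\ref{lem:unav}: if no such $S^r_i$ existed, then red $K_t$'s are ubiquitous, one builds two large vertex sets $A,B$ with $e_r(A,B)$ dense and recurses, and Lemma~\ref{lem:123} manufactures the forbidden induced $K_{t,t}$. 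The bookkeeping to make ``ubiquitous red $K_t$'s'' yield ``dense red bipartite graph between two $K_t$-free-\emph{after-deletion} sets'' is the fiddly step, but it is a standard greedy-deletion-plus-pigeonhole argument.
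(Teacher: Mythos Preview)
Your proposal does not give a proof; it is a sequence of attempts, each of which you yourself abandon, and you never settle on a working argument. More importantly, you miss the one-line idea that makes the lemma immediate, and you explicitly misidentify the role of the minimum-degree hypothesis.

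The key observation you are missing is this: if $K'$ is a \emph{red} clique of size $t$ and $S$ is the common \emph{blue} neighbourhood of $K'$, then $S$ cannot contain a red $K_t$, because such a red $K_t$ together with $K'$ would form an induced blue $K_{t,t}$. So one does not argue by contradiction that ``every large set contains a red $K_t$''; one simply \emph{constructs} a specific large set with no red $K_t$. The minimum-degree hypothesis is exactly what makes this set large: since every vertex has at least $n/4$ blue neighbours, Lemma~\ref{prop:KST} applied to a red clique of size $10t$ (obtained from Ramsey) yields a sub-clique $K'$ of size $t$ whose common blue neighbourhood $S^r$ has size at least $(1/4e)^t n \geq \eps^{-C't}$. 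Your statement that ``the minimum-degree hypothesis $d_r,d_b\geq n/4+\eps n$ is not directly used to get $m>0$'' is therefore wrong; it is precisely what gives the size bound on $S^r$.

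Two further points. First, you came close at one moment when you considered the common \emph{red} neighbourhood of a red clique and noted the inclusion--exclusion bound fails; the fix is simply to switch colours and look at the common \emph{blue} neighbourhood, which both has the right size and (by the observation above) is automatically red-$K_t$-free. Second, your reduction ``by symmetry it suffices to produce $S^r_1$'' is not quite right: one also needs $S^r$ and $S^b$ disjoint, and the paper handles this by constructing $S^b$ \emph{inside the red neighbourhood of a blue clique found within $S^r$}, so disjointness comes for free.
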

\begin{proof}
In order to prove this, note that we need only to show that there exist two disjoint sets $S^r,S^b$ of size at least $\eps^{-C't}$ such that $S^r$ contains no red $K_{t}$ and $S^b$ contains no blue $K_{t}$. First, by Lemma \ref{lem:ramsey}, there exists a monochromatic clique $K$ of size $10t$. Suppose that it is red (the other case is similar). Now, since every vertex has at least $n/4$ blue neighbours, by Lemma \ref{prop:KST} there is a subset $K' \subseteq K$ of size $t$ with at least $\left(\frac{1}{4e}\right)^t n \geq \eps^{-C't}$ blue common neighbours - we let $S^r$ be the set of these. Note indeed that $S^r$ cannot contain a red $K_t$ since otherwise, we would have an induced copy of a $K_{t,t}$ in blue. Similarly, since every vertex has at least $n/4$ red neighbours, by Lemma \ref{lem:123} there is a monochromatic clique $K' \subseteq S^r$ of size $t$, which is then blue, with at least $|S^r|^{1/3} \geq n^{1/4} \geq \eps^{-C't}$ red common neighbours - we let $S^b$ be the set of these. Note indeed that $S^b$ cannot contain a blue $K_t$ since otherwise, we would have an induced copy of a $K_{t,t}$ in red. Finally, observe that $S^r,S^b$ are disjoint by construction.
\end{proof}
\noindent We can now show the following.
\begin{lemma}\label{lem}
$m_r + m_b \geq (1-\eps^2)n$.
\end{lemma}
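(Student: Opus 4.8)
The plan is the following. Write $R$ for the set of vertices of $K_n$ lying in none of the sets $S^r_i, S^b_j$, so that the lemma asserts $R$ is small; I would assume for contradiction that $|R|$ is large. The first step is to record what maximality of $m = m_r m_b$ gives. Since $m_r, m_b \ge 1$ by the previous lemma, adjoining one further disjoint set of size $\eps^{-C't}$ containing no red $K_t$ — or one containing no blue $K_t$ — would strictly increase the product, so $R$ contains no such set. Hence every subset of $R$ of size $\eps^{-C't}$ spans both a red $K_t$ and a blue $K_t$; in particular $R$ has no monochromatic clique of size $\eps^{-C't}$, and $\alpha_r(R), \alpha_b(R) < \eps^{-C't}$, where I write $\alpha_\gamma(X)$ for the size of the largest $\gamma$-$K_t$-free subset of $X$.

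Next I would manufacture, inside $R$, a configuration to feed into Lemma~\ref{lem:123}. Let $\gamma$ be a colour in which $R$ has internal density at least $1/2$, and $\bar\gamma$ the other colour. Taking a uniformly random partition $R = A \sqcup B \sqcup C$ into three parts, a Chernoff bound (Lemma~\ref{lem:chernoff}) together with a union bound over the vertices shows that with positive probability $e_\gamma(A,B) \ge \tfrac14|A||B|$ and every $u \in B$ has $d_{\bar\gamma}(u,C) \ge \tfrac{\eps}{4}|C|$; here one uses that every vertex has $\bar\gamma$-degree at least $n/4 + \eps n$ and that $R$ is large, so that every vertex of $R$ still has many $\bar\gamma$-neighbours inside $R$. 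Fix such a partition, so $|A|,|B|,|C| \ge |R|/4$. Apply Lemma~\ref{lem:123} with $\delta := \eps/4$ and the roles of red, blue played by $\gamma, \bar\gamma$; with $N := \min(|A|,|B|,|C|) \ge |R|/4 > \eps^{-Ct+2}/4$ the conditions $N \ge \delta^{-1000t}$ and (needed below) $N^{1/3} \ge \eps^{-C't}$ both hold since $C$ is arbitrarily large. This produces a monochromatic clique $B' \subseteq B$ on $10t$ vertices and sets $A' \subseteq A$, $C' \subseteq C$ of size more than $\eps^{-C't}$ each, with every edge of $E[A',B']$ of colour $\gamma$ and every edge of $E[B',C']$ of colour $\bar\gamma$.

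Finally I would split on the colour of $B'$. If $B'$ is a $\bar\gamma$-clique, then $A' \subseteq R$ has more than $\eps^{-C't}$ vertices, so it is not $\bar\gamma$-$K_t$-free and contains a $\bar\gamma$-clique $Q$ on $t$ vertices; then $Q$ and any $t$ vertices of $B'$ form two disjoint $\bar\gamma$-cliques all of whose connecting edges have colour $\gamma$, i.e. an induced $K_{t,t}$ in colour $\gamma$, against the standing assumption. If $B'$ is a $\gamma$-clique, then likewise $C' \subseteq R$ contains a $\gamma$-clique on $t$ vertices, which together with $t$ vertices of $B'$ and the $\bar\gamma$-edges joining them forms an induced $K_{t,t}$ in colour $\bar\gamma$. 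In either case we contradict the assumption that $K_n$ has no induced $K_{t,t}$, so $|R|$ cannot be large, which proves the lemma.

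The step I expect to be the main obstacle is making the passage through Lemma~\ref{lem:123} legitimate: the set $C$ must be taken inside $R$ — so that in the second case it is maximality of $m$, applied to $C' \subseteq R$, that produces the required clique — while each vertex of $B$ must still have many $\bar\gamma$-neighbours in $C$. This is painless once one knows $R$ comprises almost all of $V(K_n)$, which here is forced by the fact that the covering sets $S^r_i, S^b_j$ are of size only $\eps^{-C't} \ll n$; pinning this down, and arranging the various polynomial-in-$\eps$ size inequalities so that the fixed large constant $C$ absorbs $C'$, the exponent $1/3$, and the Chernoff losses, is where the care goes.
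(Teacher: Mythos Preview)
Your argument has a genuine gap precisely at the step you flag as the main obstacle. You assert that ``$R$ comprises almost all of $V(K_n)$, which here is forced by the fact that the covering sets $S^r_i, S^b_j$ are of size only $\eps^{-C't} \ll n$''. This is false: each individual set is small, but nothing bounds the \emph{number} $m_r+m_b$ of sets in the collection. The family is chosen to maximise the product $m_r m_b$, and there can be up to $n\eps^{C't}$ disjoint sets, so their union may cover almost all of $V(K_n)$. When you assume for contradiction that $|R|\ge \eps^2 n$, it may well be that $|R|$ is of order exactly $\eps^2 n$; in that regime a vertex $u\in R$ can have all of its $n/4+\eps n$ $\bar\gamma$-neighbours outside $R$, and your random-partition step cannot deliver $d_{\bar\gamma}(u,C)\ge \tfrac{\eps}{4}|C|$ with $C\subseteq R$. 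Consequently Lemma~\ref{lem:123} is not applicable in the way you want, and the case split in your final paragraph never gets off the ground.

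The paper sidesteps this by working \emph{from} the covered parts rather than inside the leftover set (which it calls $Z$). Lemma~\ref{lem:unav} makes red extremely sparse between any two $S^r_i$'s, hence $e_r(X^r)\le \eps^{10}|X^r|n$, and symmetrically for blue; then the elementary inequality $|X^r||X^b|\le(|X^r|+|X^b|)^2/4$ forces $e_r(X^r,X^b)\le n|X^r|/4$ or its blue analogue. Combined with the minimum-degree hypothesis this pushes at least $\eps|S^r_i|n/2$ red edges from some $S^r_i$ into $Z$. Corollary~\ref{cor:ramseyKST} now yields a blue $K_t$ inside $S^r_i$ whose common red neighbourhood in $Z$ has size at least $\eps^{-C't}$; this neighbourhood is blue-$K_t$-free and can be adjoined as $S^b_{m_b+1}$, contradicting maximality. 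Your observation that maximality forces every large subset of $R$ to contain both a red and a blue $K_t$ is correct and pleasant, but to make an argument along your lines work you would have to allow $C$ to range outside $R$, and then supply a separate reason why $C'$ contains the monochromatic $K_t$ you need.
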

\begin{proof}
Suppose otherwise and take the corresponding sets. Let us use $V^r$ and $V^b$ to respectively denote $\bigcup_i S^{r}_i$ and $\bigcup_j S^{b}_j$. By the above lemma both of these are non-empty sets. Let also $Z$ denote the set of vertices not in any of these sets - by assumption, we have $|Z| \geq \eps^2 n$. First, note that by Lemma \ref{lem:unav} the following must hold.
\begin{itemize}
    \item For all $1 \leq i < j \leq m_r$, we have $e_r(S^{r}_i,S^r_j) < \eps^{20}|S^{r}_i||S^{r}_i|$. Similarly, for all $1 \leq i < j \leq m_b$, we have $e_b(S^{b}_i,S^b_j) < \eps^{20}|S^{b}_i||S^{b}_i|$.
    \item Consequently, $e_r(X^r) \leq \eps^{10}|X^r|n$ and $e_b(X^b) \leq \eps^{10}|X^b|n$.
\end{itemize}
This implies the following.
\begin{claim*}
There exists an $S^r_i$ such that $e_r(S^r_i,Z) \geq \eps|S^{r}_i|n/2$ or an $S^b_i$ such that $e_b(S^b_i,Z) \geq \eps|S^{b}_i|n/2$.
\end{claim*}
\begin{proof}
First note that we must have $e_r \left(X^r,X^b \right) \leq n|X^r|/4$ or $e_b \left(X^r,X^b \right) \leq n|X^b|/4$ since otherwise we would have $(|X^r|+|X^b|)^2/4 \geq |X^r||X^b| = e(X^r,X^b) > n|X^r|/4 + n|X^b|/4$. This in turn implies that $|X^r|+|X^b| > n$ which is a contradiction since the sets $X^r,X^b$ are disjoint. Suppose then w.l.o.g that $e_r \left(X^r,X^b \right) \leq n|X^r|/4$. Then, since every vertex has at least $n/4+ \eps n$ red neighbours, we must have that
$$(n/4+ \eps n)|X^r| \leq 2e_r(X^r) + e_r (X^r,X^b) + e_r (X^r,Z) \leq |X^r|(n/4+2\eps^{10}n) + e_r (X^r,Z) ,$$
and so, $e_r (X^r,Z) \geq \eps |X_r|n/2$ which by pigeonholing over the sets $S^r_i$ gives the desired outcome.
\end{proof}
\noindent We now show that the above claim is a contradiction to the maximality of $m$. Indeed, suppose that for $e_r(S^r_i,Z) \geq \eps|S^r_i|n/2 \geq \eps|S^{r}_i||Z|/2$. Then, since $|Z| \geq \eps^2n$, by Corollary \ref{cor:ramseyKST} there exists a blue clique $K \subseteq S^r_i$ of size $t$ whose vertices have at least $\eps^{-C't}$ red common neighbours in $Z$ - let $Z'$ denote the set of these. Then, note that $Z'$ cannot contain a blue clique of size $t$ since otherwise there would be an induced copy of $K_{t,t}$ in red. Hence, we can define $S^{b}_{m_b+1} := Z'$ and contradict the maximality of $m$.  
\end{proof}
To finish, we must have that $m_r+m_b \geq (1-\eps^2)n$ by the above lemma, and so, $|Z| \leq \eps^2n$. Further, observe (as in the beginning of the proof of Lemma \ref{lem}), that $e_r(X^r) \leq \eps^{10}|X^r|n$ and $e_b(X^b) \leq \eps^{10}|X^b|n$. But then, since every vertex has at least $n/4+\eps n$ red neighbours, we have that 
$$e_r(X^r,X^b) \geq (n/4+ \eps n)|X^r| - 2e_r(X^r) - e_r(X^r,Z) \geq (n/4+ \eps n)|X^r| - 2\eps^{10}|X^r|n - \eps^{2}|X^r|n > n|X^r|/4 .$$
Analogously, it also holds that $e_b(X^r,X^b) > n|X^r|/4$, which is a contradiction since then, $|X^r||X^b| > n(|X^r|+|X^b|)/4 \geq (|X^r|+|X^b|)^2/4$. Concluding, we achieve a contradiction, and thus, $K_n$ must contain an induced copy of $K_{t,t}$ in some colour.
\end{proof}
\section{Proof of Theorem \ref{thm:2}}
\begin{proof}
\noindent Let $n \geq \eps^{-Ct}$, where $C$ is an arbitrarily large constant. Suppose that $K_n$ is two-coloured with red and blue and that every vertex $v$ is such that $d_r(v),d_b(v) \geq \eps n$. Suppose for sake of contradiction that there is no $t$-locally-unavoidable pattern. The following is an observation already present in \cite{KM}.
\begin{observation}\label{obs:altC4}
There is no alternating homogeneous $t$-blow-up of $C_4$. 
\end{observation}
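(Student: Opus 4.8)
The plan is to show that an alternating homogeneous $t$-blow-up of $C_4$ forces a $t$-locally-unavoidable pattern — either inside the blow-up itself or, in one stubborn case, by also invoking the minimum degree hypothesis — which contradicts our standing assumption. So suppose $A_0\cup A_1\cup A_2\cup A_3$ is such a blow-up, write $c_i$ for the colour of the monochromatic clique $A_i$, and recall that $E[A_i,A_{i+1}]$ is red for $i\in\{0,2\}$ and blue for $i\in\{1,3\}$ (indices modulo $4$).

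I would begin with a finite case analysis on $(c_0,c_1,c_2,c_3)$. Whenever two consecutive cliques $A_i,A_{i+1}$ share a colour $d$ that is opposite to the colour of $E[A_i,A_{i+1}]$, the $2t$-set $A_i\cup A_{i+1}$ is an induced $K_{t,t}$ (the colour of $E[A_i,A_{i+1}]$ induces a $K_{t,t}$ with parts $A_i$ and $A_{i+1}$, and the other colour induces $A_i$ and $A_{i+1}$ as two $K_t$'s). If instead two consecutive cliques share the colour of the edge between them, $A_i\cup A_{i+1}$ is a monochromatic $K_{2t}$; chasing one step further around the $C_4$ (whose edges alternate in colour) one checks that this cannot persist all the way around without producing, for some later pair of consecutive cliques, the opposite-colour situation just handled. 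Hence we may assume the colours $(c_0,c_1,c_2,c_3)$ alternate, and, swapping colours and rotating, that $c_0=c_2=$ red and $c_1=c_3=$ blue. In this configuration only the diagonals $E[A_0,A_2]$ and $E[A_1,A_3]$ are unconstrained, and three of their possibilities conclude at once: $E[A_0,A_2]$ all blue gives an induced blue $K_{t,t}$ on $A_0\cup A_2$; $E[A_1,A_3]$ all red gives an induced red $K_{t,t}$ on $A_1\cup A_3$; and $E[A_0,A_2]$ all red together with $E[A_1,A_3]$ all blue exhibits $A_0\cup A_1\cup A_2\cup A_3$ as a member of $P_t$ via $(V_1,V_2,V_3,V_4)=(A_0,A_2,A_1,A_3)$.

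The remaining possibility — at least one diagonal receiving both colours — is the main obstacle: inside the blow-up alone there need not be a pattern, so here I would use the minimum degree hypothesis $d_r(v),d_b(v)\ge\eps n$ (and $n\ge\eps^{-Ct}$). The key auxiliary fact, itself a consequence of ``no pattern'', is that the common red neighbourhood of any blue $K_t$ contains no blue $K_t$, and symmetrically for a red $K_t$ and its common blue neighbourhood — otherwise the original $K_t$ and the one found in its neighbourhood span an induced $K_{t,t}$. The idea is then to feed the blue clique $A_1$ (and the red clique $A_0$) into the minimum degree condition through Lemma~\ref{prop:KST}, producing a large set that is red-complete (resp. blue-complete) to a still-sizeable sub-clique of $A_1$ (resp. $A_0$), and then to locate a monochromatic clique inside that large set via Lemma~\ref{lem:ramsey} and Lemma~\ref{lem:123}, landing in exactly the forbidden configuration above. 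Making the counting quantitatively tight — keeping the sets large enough to apply Ramsey while the relevant cliques stay of size $t$ — is the delicate step, and plays the role here of the degree count that closes the proof of Theorem~\ref{thm:1}.
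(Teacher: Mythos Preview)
The paper does not prove this observation; it simply attributes it to \cite{KM}. So there is no paper proof to compare against, but your attempt deserves a few remarks.

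Your case analysis on the clique colours $(c_0,c_1,c_2,c_3)$ is correct, and once you are in the alternating case with both diagonals $E[A_0,A_2]$ and $E[A_1,A_3]$ monochromatic, your three sub-cases exhaust all possibilities and each yields a $t$-locally-unavoidable pattern. (A small quibble: the reduction from $(b,r,b,r)$ to $(r,b,r,b)$ is by swapping colours together with a \emph{reflection} of the $C_4$, not a rotation --- swapping and then rotating by one step returns you to $(b,r,b,r)$.)

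The problematic part is your handling of the ``remaining possibility'' where a diagonal is not monochromatic. The plan you sketch --- push $A_0$ or $A_1$ through the minimum-degree hypothesis via Lemma~\ref{prop:KST}, then hunt for a forbidden clique with Lemmas~\ref{lem:ramsey} and~\ref{lem:123} --- is not an argument, and it is unclear it can be made into one: the $A_i$ have size only $t$, leaving essentially no room for K\H{o}v\'ari--S\'os--Tur\'an or dependent random choice to bite, and the cliques you would find in the large neighbourhoods bear no controlled relation to the original blow-up. You yourself note that inside the $4t$ vertices alone there need not be a pattern when the diagonals are mixed, and nothing in your sketch explains how one is produced elsewhere.

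However, this case never arises in the paper. In both places where Observation~\ref{obs:altC4} is invoked (the claim inside Lemma~\ref{lem1} and the final lemma of the proof of Theorem~\ref{thm:2}), the four sets constructed have \emph{all six} pairwise bipartite graphs monochromatic: the two diagonals come from the pair $(A',C')$, which is monochromatic by the ``moreover'' clause of Lemma~\ref{lem:123}, and from the application of Corollary~\ref{cor:ramseyKST} between $B'$ and $U''$. So the only instance of the observation the paper ever needs is exactly the one your case analysis already handles in full; you can simply drop your final paragraph.
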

\noindent Now, let us take a random partition of $K_n$ into two sets $X,Y$. By Lemma \ref{lem:chernoff}, we can assume that $|X|,|Y| > n/3$ and every vertex has at least $\eps n/3$ neighbours in each colour and each part. The following is the crucial step of the proof. Define $I$ to be the minimal integer $i \geq 0$ such that the following condition is not satisfied.
\begin{itemize}    \item[(*)] There exists sets $X_1 \subseteq X$, $Y_1 \subseteq Y$ such that $|X \setminus X_1||Y \setminus Y_1| \leq 0.9^{i}n^2$ and sets $X_2 \subseteq X$, $Y_2 \subseteq Y$ of size at least $0.5^{i} \cdot \eps^{-100t}$ such that: every vertex $v \in X_2$ has $d_r(v,Y_1) \geq (1-10^{i}\eps^{30})|Y_1|$; and every vertex $v \in Y_2$ has $d_b(v,X_1) \geq (1-10^{i}\eps^{30})|X_1|$.
\end{itemize}
\noindent Let us briefly note why in particular, such an $I$ exists. First, it is clear that $i = 0$ satisfies condition (*) by taking $X_1,Y_1 = \emptyset$. Further, note the following.
\begin{claim}
$i = \lceil 4\log_{0.9} \left(\eps \right) \rceil$ does not satisfy condition (*). In particular, $I \leq \lceil 4\log_{0.9} \left(\eps \right) \rceil$.
\end{claim}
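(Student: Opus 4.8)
I would argue by contradiction. Suppose $i^\star := \lceil 4\log_{0.9}(\eps)\rceil$ \emph{does} satisfy condition~(*), and fix witnessing sets $X_1\subseteq X$, $Y_1\subseteq Y$, $X_2\subseteq X$, $Y_2\subseteq Y$. The idea is to pit the ``coverage'' requirement of~(*) against its ``purity'' requirement: at $i=i^\star$ we have $0.9^{i^\star}\le 0.9^{4\log_{0.9}(\eps)}=\eps^4$, so $X_1,Y_1$ already cover all but a tiny fraction of $X\times Y$, while $X_2$ and $Y_2$ are still forced to be nonempty (indeed $0.5^{i^\star}\eps^{-100t}\ge 1$, a routine computation from the value of $i^\star$). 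I claim this configuration is impossible, because every vertex has linearly many neighbours of each colour in each part of the random partition.

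Concretely: from $|X\setminus X_1|\,|Y\setminus Y_1|\le \eps^4 n^2$, at least one of $|X\setminus X_1|,|Y\setminus Y_1|$ is small; since~(*) is symmetric under simultaneously swapping the two colours and the two parts, assume $|Y\setminus Y_1|$ is the small one. Pick a vertex $v\in X_2$. On the one hand, the random partition guarantees $d_b(v,Y)\ge \eps n/3$, so $v$ retains many blue neighbours in $Y_1$ after deleting the few vertices of $Y\setminus Y_1$. On the other hand, the purity condition gives $d_r(v,Y_1)\ge (1-10^{i^\star}\eps^{30})|Y_1|$, that is, $v$ has at most $10^{i^\star}\eps^{30}|Y_1|\le 10^{i^\star}\eps^{30}n$ blue neighbours in $Y_1$; and the whole construction (the bases $0.9,0.5,10$, the exponent $30$, and the choice $i^\star=\lceil 4\log_{0.9}(\eps)\rceil$) is set up precisely so that this last quantity is smaller than the number of blue neighbours of $v$ forced to lie in $Y_1$. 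This contradiction shows that $i^\star$ does not satisfy~(*), and hence $I\le i^\star$. The case where $|X\setminus X_1|$ is the small one is identical after relabelling: one uses a vertex of $Y_2$, the bound $d_r(v,X)\ge \eps n/3$ coming from the random partition, and the purity condition for $X_1$.

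The argument is only a couple of lines; the single point that needs care is verifying that at $i=i^\star$ the three quantities $0.9^{i^\star}$, $0.5^{i^\star}\eps^{-100t}$ and $10^{i^\star}\eps^{30}$ are \emph{simultaneously} in the required ranges: the first at most $\eps^4$ so that $X_1,Y_1$ cover nearly everything, the second at least $1$ so that $X_2,Y_2$ are nonempty, and the third below the minimum-degree threshold so that ``$v$ is red to almost all of $Y_1$'' is incompatible with the blue neighbourhood of $v$ that is guaranteed to sit inside $Y_1$. This bookkeeping with $\log_{0.9}$ is the main (and only mild) obstacle. Observation~\ref{obs:altC4} is not needed for this particular claim; it enters only afterwards, in the analysis of the failure of~(*) at $i=I$ itself.
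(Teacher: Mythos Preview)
Your proposal is correct and follows essentially the same route as the paper: argue by contradiction, use $0.9^{i^\star}\le \eps^4$ to force one of $|X\setminus X_1|,|Y\setminus Y_1|$ to be at most $\eps^2 n$, then pick a vertex from the (nonempty) set $X_2$ or $Y_2$ and contrast its guaranteed $\eps n/3$ neighbours of the ``wrong'' colour inside the large part against the purity bound $10^{i^\star}\eps^{30}n$. The only cosmetic difference is that the paper takes $|X\setminus X_1|\le \eps^2 n$ and a vertex $v\in Y_2$ (counting \emph{red} neighbours in $X$), whereas you take the symmetric case $|Y\setminus Y_1|$ small and $v\in X_2$; as you correctly note, condition~(*) is symmetric under swapping colours and parts, so this is the same argument.
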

\begin{proof}
Suppose for sake of contradiciton that such $i := \lceil 4\log_{0.9} \left(\eps \right) \rceil$ satisfies (*). Then, we have that $|X \setminus X_1||Y \setminus Y_1| \leq \eps^4n^2$ and so, without loss of generality, $|X \setminus X_1| \leq \eps^2n$. In turn, $Y_2$ is non-empty and thus, we achieve a contradiction since by definition there is a vertex $v \in Y_2$ with at most $|X \setminus X_1| + 10^i \eps^{30}|X_1| \leq \eps^2 n + 10^i \eps^{30}n < \eps n/3$ red neighbours in $X$. This contradicts that every vertex has at least $\eps n/3$ red neighbours in $X$.
\end{proof}
\noindent To finish, we claim that the above claim leads to a contradiction. Indeed, take $I-1$, which by definition satisfies condition (*) and take the corresponding sets $X_1,X_2,Y_1,Y_2$. Since $i = I$ does not satisfy (*), we must have that $|X \setminus X_1||Y \setminus Y_1| > 0.9^{I}n^2$ and thus, $|X \setminus X_1|,|Y \setminus Y_1| > 0.9^{I}n \geq \eps^4 n$. Now, suppose w.l.o.g that red is the densest colour in $E[X \setminus X_1,Y \setminus Y_1]$, so that $e_r(X \setminus X_1,Y \setminus Y_1) \geq \frac{1}{2}|X \setminus X_1||Y \setminus Y_1|$.
\begin{lemma}\label{lem1}
There exists $U \subseteq Y \setminus Y_1$ and $V \subseteq X$ such that the following hold: $|U| \geq 0.1|Y \setminus Y_1|$, $|V|  \geq 2n^{1/10}$ and all $v \in V$ have $d_r(v,U) \geq (1-\eps^{50})|U|$.
\end{lemma}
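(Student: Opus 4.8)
The plan is to split into an easy case, where the conclusion is essentially immediate, and a second case that I claim cannot occur under the standing assumption that $K_n$ contains no $t$-locally-unavoidable pattern. Throughout write $A := X\setminus X_1$ and $B := Y\setminus Y_1$, and recall that $|A|,|B| > 0.9^{I}n \geq 0.9\eps^{4}n$ and $e_r(A,B)\geq\tfrac12|A||B|$.

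\emph{Easy case: $e_b(A,B)\leq\tfrac12\eps^{50}|A||B|$.} By Markov's inequality at most $|A|/2$ vertices $v\in A$ satisfy $d_b(v,B)>\eps^{50}|B|$, so at least $|A|/2$ vertices $v\in A\subseteq X$ have $d_r(v,B)\geq(1-\eps^{50})|B|$. Since $n\geq\eps^{-Ct}$ with $C$ large, $|A|/2\geq 0.45\eps^{4}n\geq 2n^{1/10}$, so putting $U:=B$ and letting $V$ be $\lceil 2n^{1/10}\rceil$ of these vertices proves the lemma in this case. So assume from now on that $e_b(A,B)>\tfrac12\eps^{50}|A||B|$; then both colours have density at least $\eps^{51}$ in the bipartite graph between $A$ and $B$.

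It remains to rule out this second case. The strategy is to use Lemma \ref{lem:123} to extract from each colour a monochromatic $K_{10t}$ with polynomially large common neighbourhoods, and then to patch these together into an alternating homogeneous $t$-blowup of $C_4$, contradicting Observation \ref{obs:altC4}. Since every $u\in B\subseteq Y$ has $d_b(u,X)\geq\eps n/3$, Lemma \ref{lem:123} applies to the red bipartite graph between $A$ and $B$ with auxiliary set $X$, a suitable $\delta$ which is a fixed power of $\eps$, and $N:=0.9\eps^{4}n$ (the requirement $N\geq\delta^{-1000t}$ holds since $n\geq\eps^{-Ct}$ with $C$ large); it yields a monochromatic clique $W_0\subseteq B$ of size $10t$, sets $A_0\subseteq A$ and $C_0\subseteq X$ of size $\geq N^{1/3}$ with $E[A_0,W_0]$ entirely red and $E[W_0,C_0]$ entirely blue, and a monochromatic $K_{10t,N^{1/4}}$ inside the bipartite graph spanned by $A_0$ and $C_0$ whose clique side is monochromatic. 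Applying Lemma \ref{lem:123} again with red and blue interchanged (using $d_r(u,X)\geq\eps n/3$ for $u\in B$ and $e_b(A,B)\geq\eps^{51}|A||B|$) and iterating on the polynomially large sets so produced, one obtains four monochromatic $K_t$'s $A_0,A_1,A_2,A_3$ with each $E[A_i,A_{i+1}]$ monochromatic, red for $i\in\{0,2\}$ and blue for $i\in\{1,3\}$ — an alternating homogeneous $t$-blowup of $C_4$, which is impossible by Observation \ref{obs:altC4}. (If along the way a monochromatic clique and the relevant part of its common neighbourhood are both blue cliques, one short-circuits instead to an induced red $K_{t,t}$; either way one lands on a $t$-locally-unavoidable pattern.) All intermediate sets have size polynomial in $n$, hence lie far above the $2^{2t}$ and $\eps^{-C't}$ thresholds needed for Lemmas \ref{lem:ramsey}, \ref{prop:KST} and \ref{lem:123}.

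I expect the patching in the last paragraph to be the main obstacle: one must arrange the monochromatic cliques produced by the successive applications of Lemma \ref{lem:123} so that they land on the correct sides and carry the correct colours, so that the four connecting bipartite graphs of the $C_4$ genuinely alternate. This bookkeeping, and the use of the no-induced-$K_{t,t}$ hypothesis (through the short-circuits above and through Observation \ref{obs:altC4}), are where the content lies; the easy case and the size estimates are routine since $n\geq\eps^{-Ct}$ with $C$ arbitrarily large. That some structural input is essential is clear from the random $2$-colouring, which meets all the degree and density hypotheses yet violates the conclusion of the lemma, so the second case really must be shown vacuous rather than handled directly.
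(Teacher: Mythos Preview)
Your easy case is fine, but the dichotomy is the wrong one and the second case is not vacuous. At the very first step of the iteration ($I=1$) one has $X_1=Y_1=\emptyset$, so $A=X$ and $B=Y$, and since every vertex has at least $\eps n/3$ neighbours of each colour in each part, both colours certainly have density exceeding $\tfrac12\eps^{50}$ between $A$ and $B$. Thus your ``second case'' is the generic situation, and the lemma must be \emph{proved} there, not refuted. Your patching sketch---two independent applications of Lemma~\ref{lem:123} followed by unspecified iteration---does not close the cycle: Lemma~\ref{lem:123} produces a path $A'\!-\!B'\!-\!C'$ of length two with prescribed colours, and nothing in your argument manufactures the fourth vertex-class together with the two remaining bipartite edges of the $C_4$. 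You flag this yourself as ``the main obstacle'', and indeed it is the whole content of the lemma.

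The paper does not try to show the second case impossible; it builds $U$ and $V$ constructively, and the alternating $C_4$ appears only inside a subsidiary density estimate. First pass to $X'\subseteq X\setminus X_1$ where every vertex has at least $\tfrac14|Y\setminus Y_1|$ red neighbours in $Y\setminus Y_1$. Then apply Lemma~\ref{lem:123} (with colours swapped) to the blue edges from $X'$ into $Y$, using $d_r(u,X)\geq\eps n/3$ for the third set, to obtain monochromatic cliques $A'\subseteq X'$, $B'\subseteq Y$ of size $10t$ and a set $C'\subseteq X$ of size at least $n^{1/5}$ with $E[A',B']$ blue, $E[B',C']$ red and $E[A',C']$ monochromatic. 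Now \emph{define} $U\subseteq Y\setminus Y_1$ to be those vertices with more than $t$ red neighbours in $A'$; since $A'\subseteq X'$ this has $|U|\geq|Y\setminus Y_1|/8$. The point is that $U$ is engineered relative to the triple $(A',B',C')$: every $u\in U$ already carries a red edge to a $t$-subset of $A'$, which is precisely the missing side of the $C_4$. One then shows $e_b(C',U)\leq\tfrac12\eps^{50}|C'||U|$ --- for otherwise Lemma~\ref{lem:123}, a pigeonhole over $t$-subsets of $A'$, and Corollary~\ref{cor:ramseyKST} against $B'$ assemble an alternating homogeneous $t$-blowup of $C_4$, contradicting Observation~\ref{obs:altC4} --- and finally takes $V$ to be the vertices of $C'$ with at most $\eps^{50}|U|$ blue neighbours in $U$. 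So the $C_4$ argument is used not to rule out a case of the lemma, but to force $V:=C'$ to be mostly red towards the hand-crafted set $U$.
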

\begin{proof}
First, observe that since $e_r(X \setminus X_1,Y \setminus Y_1) \geq \frac{1}{2}|X \setminus X_1||Y \setminus Y_1|$, there is a subset $X' \subseteq X \setminus X_1$ of size at least $\frac{1}{4}|X \setminus X_1| \geq \eps^4n/4$ such that all $v \in X'$ have at least $\frac{1}{4}|Y \setminus Y_1|$ red neighbours in $Y \setminus Y_1$. Now, since every vertex in $X' \subseteq X$ has at least $\eps n/3$ blue neighbours in $Y$ and every vertex in $Y$ has at least $\eps n/3$ red neighbours in $X$, we can directly apply Lemma \ref{lem:123} with $\delta = \eps/3$ and $A := X', B := Y, C = X$ in order to find sets $A' \subseteq X', B' \subseteq Y, C' \subseteq X$ such that $|A'|=|B'| = 10t$ are monochromatic cliques, $|C'| \geq n^{1/5}$ and $(A',B')$ is a complete bipartite graph in blue, $(B',C')$ is a complete bipartite graph in red and $(A',C')$ is a monochromatic complete bipartite graph. 

\noindent Now, since $A' \subseteq X'$, every vertex in $A'$ has at least $\frac{1}{4}|Y \setminus Y_1|$ red neighbours in $Y \setminus Y_1$ and so, $e_r(A',Y \setminus Y_1) \geq \frac{1}{8}|A'||Y \setminus Y_1|$. Therefore, there exists a subset $U \subseteq Y \setminus Y_1$ of size at least $\frac{1}{8}|Y \setminus Y_1|$ such that all $u \in U$ have $d_r(u,A') \geq \frac{1}{8}|A'| > t$. 
\begin{claim*}
$e_b(C',U) \leq \eps^{50}|U||C'|/2$.
\end{claim*}
\begin{proof}
Suppose otherwise. Recall that $|C'|,|U| \geq n^{1/5}$ and so, by Lemma \ref{lem:123} there exists a monochromatic clique $C'' \subseteq C'$ of size $t$ with at least $n^{1/30}$ blue common neighbours in $U$ - let this set of neighbours be denoted as $U'$. Recalling that every vertex in $U'$ has at least $t$ red neighbours in $A'$, by pigeonholing, there is a subset $U'' \subseteq U'$ of size at least $n^{1/30} / \binom{|A'|}{t} \geq n^{1/30} / \binom{10t}{t} \geq 20^{t}$ and $A'' \subseteq A'$ of size $t$ such that $U'', A''$ forms a red complete bipartite graph. Finally, by Corollary \ref{cor:ramseyKST}, since $B'$ is a monochromatic clique of size $10t$, there exists a monochromatic $K_{t,t}$ in the bipartite graph formed by $U'',B'$ whose parts are monochromatic cliques  - let it have parts $U''',B''$. We are now done since $A'',B'',C'',U'''$ form an alternating homogeneous $t$-blow-up of $C_4$, which contradicts Observation \ref{obs:altC4}.
\end{proof}
\noindent From the above claim we are done by taking $V$ to be the set of all vertices $v \in C'$ such that $d_b(v,U) \leq \eps^{50}|U|$ - this has size at least $|C'|/2 \geq n^{1/5}/2 \geq 2n^{1/10}$.
\end{proof}
\noindent We will now use the sets $U,V$ found in Lemma \ref{lem1} to show that $i = I$ must satisfy (*), which is a contradiction. This will be done by \emph{essentially} just re-setting $Y_1 = Y_1 \cup U$. Indeed, note first that the following must hold.
\begin{lemma}
$e_b(Y_1,V) \leq 4 \cdot 10^{I-1}\eps^{30}|Y_1||V|$ or $e_b(U,X_2) \leq 4 \cdot 10^{I-1}\eps^{30}|U||X_2|$.
\end{lemma}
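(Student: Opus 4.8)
The plan is to argue by contradiction. Write $\gamma := 4\cdot 10^{I-1}\eps^{30}$ for the density in the statement, and suppose that \emph{both} $e_b(Y_1,V) > \gamma|Y_1||V|$ and $e_b(U,X_2) > \gamma|U||X_2|$. Note we may assume $\gamma<1$, as otherwise $e_b(Y_1,V)\le|Y_1||V|\le\gamma|Y_1||V|$ and we are done. The goal is then to exhibit monochromatic $t$-cliques $A_0\subseteq V$, $A_1\subseteq U$, $A_2\subseteq X_2$, $A_3\subseteq Y_1$ with $E[A_0,A_1]$ red, $E[A_1,A_2]$ blue, $E[A_2,A_3]$ red and $E[A_3,A_0]$ blue; these form an alternating homogeneous $t$-blow-up of $C_4$, contradicting Observation~\ref{obs:altC4}.

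The point is that we now have, cyclically, four dense bipartite graphs realizing exactly the alternating $C_4$ colour pattern: by Lemma~\ref{lem1} every $v\in V$ has $d_r(v,U)\ge(1-\eps^{50})|U|$, so $(V,U)$ is almost completely red; by the defining property of $X_2$ (coming from the fact that $I-1$ satisfies~(*)) every $v\in X_2$ has $d_r(v,Y_1)\ge(1-\gamma/4)|Y_1|$, so $(X_2,Y_1)$ is almost completely red; and $(U,X_2)$, $(Y_1,V)$ are blue of density more than $\gamma$ by assumption. To turn this into the four cliques I would first pass to subsets on which the two blue densities become minimum-degree conditions (the convenient one to extract is a set $U_2\subseteq U$, losing only an $\eps^{30}$-fraction, on which every vertex is still red-complete to all but an $\eps^{20}$-fraction of $V$ and has blue-degree $\ge\tfrac12\gamma|X_2|$ into $X_2$), and then apply Lemma~\ref{lem:drc} to the pair $(X_2,U_2)$ \emph{with the clique-side on $U_2$}, obtaining a set $S\subseteq U_2$ of size $2^{2t}$ in which every $t$ vertices have a large (say $\eps^{-5t}$) common blue neighbourhood in $X_2$. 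Applying Lemma~\ref{lem:ramsey} inside $S$ yields the clique $A_1$, and applying it again inside the common blue neighbourhood of $A_1$ in $X_2$ yields $A_2$, with $E[A_1,A_2]$ blue by construction, $A_1$ red-complete to most of $V$, and $A_2$ (being in $X_2$) red-complete to most of $Y_1$.

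It then remains to close the four-cycle: find monochromatic $t$-cliques $A_0\subseteq V$ red-complete to $A_1$ and $A_3\subseteq Y_1$ red-complete to $A_2$ with $E[A_3,A_0]$ blue. For this one restricts $V$ to the common red neighbourhood $V^\ast$ of $A_1$ and $Y_1$ to the common red neighbourhood $Y_1^\ast$ of $A_2$, each still almost all of the original set (or, if $\gamma$ is not small enough for the union bound here to be harmless, the blue densities are bounded below by a polynomial in $1/t$ and the remaining argument only gets easier); one checks $e_b(Y_1^\ast,V^\ast)\ge\tfrac12\gamma|Y_1^\ast||V^\ast|$, and now a single application of Lemma~\ref{lem:123} (this time entirely among the large sets $V^\ast$, $Y_1^\ast$) together with Corollary~\ref{cor:ramseyKST} produces $A_0$ and $A_3$ along with the blue complete bipartite graph between them. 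The genuinely delicate part is the size bookkeeping: one must choose the dependent-random-choice exponent $r$ in Lemma~\ref{lem:drc} so that the small set $X_2$ always plays the role of the \emph{common-neighbour side}, never the clique-side, of these extractions, and one must separately dispose of the degenerate cases ($\gamma\ge 1$, $\gamma$ comparable to $1/t$, or $Y_1$ too small for the lemmas to run). That $n\ge\eps^{-Ct}$ with $C$ arbitrarily large, together with $|U|\ge\tfrac1{10}\eps^4 n$, $|V|\ge 2n^{1/10}$ and $|X_2|\ge 0.5^{I-1}\eps^{-100t}$, is exactly what makes these parameter choices go through.
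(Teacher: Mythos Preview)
Your approach is essentially the same as the paper's: assume both blue densities exceed $\gamma$, then assemble an alternating homogeneous $t$-blow-up of $C_4$ inside $V,U,X_2,Y_1$ to contradict Observation~\ref{obs:altC4}. The paper organises the extraction differently---it first passes to $X_2'\subseteq X_2$ and $Y_1'\subseteq Y_1$ (vertices with high blue degree into $U$ and $V$ respectively), applies Lemma~\ref{lem:123} to the triple $(X_2',Y_1',V)$ to obtain cliques $B'\subseteq Y_1'$, $C'\subseteq V$ and a large set $A'\subseteq X_2'$, and only then pigeonholes through $U$ to pick up the fourth clique---whereas you fix the $U$- and $X_2$-cliques first via dependent random choice and close the cycle through $Y_1^*,V^*$ afterwards. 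Both orderings work; the paper's has the mild advantage that every extraction is a direct call to the packaged Lemma~\ref{lem:123}, so no separate bookkeeping for the ``$\gamma$ not small'' case is needed.

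One small imprecision: in your final step you invoke Lemma~\ref{lem:123} ``entirely among $V^*,Y_1^*$'', but that lemma is stated for three sets with fixed colour roles. What you actually need there is just dependent random choice on the blue bipartite graph between $Y_1^*$ and $V^*$ followed by two applications of Ramsey (equivalently, the argument \emph{inside} the proof of Lemma~\ref{lem:123}); this is exactly what the paper does at the analogous closing step via Corollary~\ref{cor:ramseyKST}, and your sketch makes clear you have the right tools in mind.
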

\begin{proof}
Suppose for sake of contradiction that $e_b(Y_1,V) > 4 \cdot 10^{I-1}\eps^{30}|Y_1||V|$ and $e_b(U,X_2) > 4 \cdot 10^{I-1}\eps^{30}|U||X_2|$. First, let us note that there are at least $2 \cdot 10^{I-1}\eps^{30}|X_2|$ vertices in $X_2$ with at least $2 \cdot 10^{I-1}\eps^{30}|U|$ blue neighbours in $U$ - let $X'_2$ denote the set of these vertices. Observe also that since by assumption all vertices $v \in X'_2 \subseteq X_2$ have $d_r(v,Y_1) \geq (1-10^{I-1}\eps^{10})|Y_1|$, we have that $e_r(X'_2,Y_1) \geq (1-10^{I-1}\eps^{10})|Y_1||X'_2|$.

\noindent Secondly, there must also exist at least $2 \cdot 10^{I-1}\eps^{30}|Y_1|$ vertices in $Y_1$ with at least $2 \cdot 10^{I-1}\eps^{30}|V|$ blue neighbours in $V$ - let $Y_1' \subseteq Y_1$ denote the set of these vertices. Since by before $e_r(Y_1,X'_2) \geq (1-10^{I-1}\eps^{10})|Y_1||X'_2|$, we must have that $$e_r(Y_1',X'_2) \geq (1-10^{I-1}\eps^{10})|Y_1||X'_2| - |Y_1 \setminus Y_1'||X'_2| \geq  10^{I-1}\eps^{30}|Y_1||X'_2| \geq \eps^{30}|Y'_1||X'_2| .$$ 
Since $|X'_2|,|Y'_1|,|V| \geq \eps^{30} \cdot 0.5^{I-1} n^{1/10} \geq n^{1/20}$ (since $I \leq 10 \log_{0.9}(\eps)$), we can now apply Lemma \ref{lem:123} with $A := X'_2, B:= Y'_1, C := V$ to find sets $A' \subseteq X'_2, B' \subseteq Y'_1, C' \subseteq V$ such that $B', C'$ are monochromatic cliques of size $10t$, $|A'| \geq n^{1/100}$ and $(A',B')$ form a complete red bipartite graph, $(B',C')$ form a complete blue bipartite graph and $(A',C')$ form a monochromatic complete bipartite graph. 

\noindent Now, consider $C' \subseteq V$ and recall that by Lemma \ref{lem1} we must have $e_r(C',U) \geq (1-\eps^{50})|C'||U|$. This implies that there are at least $(1-2\eps^{50})|U|$ vertices in $U$ with at least $|C'|/2 \geq t$ red neighbours in $C'$ - let us denote this set as $U' \subseteq U$. Since by definition every vertex in $A' \subseteq X'_2$ has at least $2 \cdot 10^{I-1}\eps^{30}|U|$ blue neighbours in $U$ we have that $e_b(A',U') \geq 2 \cdot 10^{I-1}\eps^{30}|U||A'| - |U \setminus U'||A'| \geq \eps^{30}|U'||A'|$. To finish, since $|U'|,|A'| \geq n^{1/100}$ note that by Lemma \ref{lem:123} and pigeonholing, there exist a monochromatic clique $A'' \subseteq A'$ of size $t$ whose vertices have at least $n^{1/500}/ \binom{|C'|}{|C'|/2} \geq 20^t $ blue common neighbours in $U'$, all of which are red adjacent to the same subset $C'' \subseteq C'$ of size $t$ - let $U''$ denote the set of common neighbours. Finally, since $|B'| \geq 10t$, we can apply Corollary \ref{cor:ramseyKST} to find $B'' \subseteq B'$ and $U'' \subseteq U'$ which are monochromatic cliques of size $t$ and such that $(B',U'')$ forms a complete monochromatic bipartite graph. We are done since $A'',B'',C'',U''$ form an alternating homogeneous $t$-blow-up of $C_4$.
\end{proof}
\noindent To finish the proof of Theorem \ref{thm:2}, let us suppose w.l.o.g. that the first case of the previous lemma occurs, that is, that $e_b(U,X_2) \leq 4 \cdot 10^{I-1}\eps^{30}|U||X_2|$ - the other case is analogous. Note that this implies that the subset $X'_2 := \{v \in X_2: d_r(v,U) \geq (1-10^{I}\eps^{30})|U|\} \subseteq X_2$ has size at least $|X_2|/2$. We claim now that $Y'_1 := Y_1 \cup U, Y_2, X_1, X'_2 $ ensure that the previous conditions are satisfied with $i = I$, a contradiction. Indeed, note first that since $|U| \geq 0.1|Y \setminus Y_1|$, we have $|X \setminus X_1||Y \setminus Y'_1| \leq 0.9|X \setminus X_1||Y \setminus Y_1| \leq 0.9^{I}n^2$. Secondly, recall that $|X'_2| \geq |X_2|/2 \geq 0.5^{I} n^{1/10}$. Finally, by definition we have that every vertex $v \in X'_2$ has
$$d_r(v,Y'_1) \geq d_r(v,U) + d_r(v,Y_1) \geq (1- 10^{I}\eps^{30})|U| + (1-10^{I-1}\eps^{30})|Y_1| \geq (1- 10^{I}\eps^{30})|U||Y_1| ,$$
as desired.
\end{proof}

\section{Concluding remarks}
We have shown that there is some large constant $C$ such that for every $1/2 > \varepsilon>0$, any $2$-coloured $K_n$ on $n\geq \varepsilon^{-Ct}$ vertices with $d_{r,b}(v)\geq (1/4+\varepsilon)n$ for all vertices $v$ or $d_{r,b}(v)\geq \varepsilon n$ for all $v$, contains an induced copy of $K_{t,t}$ in some colour or a $t$-locally-unavoidable pattern, respectively. The bounds on $n$ are essentially tight for fixed $\varepsilon>0$ and with a bit more work the constant $C$ can be considerably reduced. Another way of looking at the problem is to fix $n$ and ask what is the smallest minimum degree in both colours which force either of the structures. It would be interesting to know whether our methods could be extended to prove the following Tur\'an-type problem.
\begin{problem}
Is there a constant $C(t)$ such that any $2$-coloured $K_n$ on $n$ vertices and minimum degree at least $n/4+C(t)n^{2-1/t}$ in both colours contains an induced $K_{t,t}$ in one of the colours? Similarly, if the minimum degree in each colour is at least $C(t)n^{2-1/t}$, does it contains a $t$-locally-unavoidable pattern?
\end{problem}

\bibliographystyle{plain}

\end{document}